\newtheorem{theorem}{Theorem}[section]
\newtheorem{problem}{Problem}
\theoremstyle{definition}
\theoremstyle{remark}
\newtheorem*{remark}{Remark}
\newtheorem{proposition}[theorem]{Proposition}
\DeclareMathOperator*{\divergence}{div}
\DeclareMathOperator*{\Grad}{\boldsymbol\nabla}
\DeclareMathOperator*{\Grads}{\boldsymbol\nabla_{\rm sym}}
\DeclareMathOperator{\grads}{\boldsymbol\nabla_s}
\newcommand\RE{\mathbb{R}}
\newcommand{\derivative}[2]{\frac{\partial #1}{\partial #2}}
\newcommand\Huo{H^1_0(\Omega)^d}
\newcommand\Ldo{L^2_0(\Omega)}
\newcommand\Hub{H^1(\B)^d}
\newcommand\Oft{\Omega^f_t}
\newcommand\Ost{\Omega^s_t}
\newcommand\B{\mathcal B}
\renewcommand\u{\mathbf{u}}
\renewcommand\v{\mathbf{v}}
\renewcommand\d{\mathbf{d}}
\renewcommand\c{\mathbf{c}}
\newcommand\n{\mathbf{n}}
\newcommand\w{\mathbf{w}}
\newcommand\x{\mathbf{x}}
\newcommand\z{\mathbf{z}}
\newcommand\s{\mathbf{s}}
\renewcommand\S{\mathbf{S}}
\newcommand\V{\mathbf{V}}
\newcommand\X{\mathbf{X}}
\newcommand\Y{\mathbf{z}}
\newcommand\LL{\boldsymbol{\Lambda}}
\newcommand\ssigma{\boldsymbol\sigma}
\newcommand\F{\mathbb{F}}
\renewcommand\P{\mathbb{P}}
\newcommand\ds{\mathrm{d}\s}
\newcommand\dx{\mathrm{d}\x}
\newcommand\dt{\Delta t}
\newcommand\dr{\delta\rho}
\newcommand\T{\mathcal{T}}
\newcommand\CNM{\textsf{CNm}\xspace}
\newcommand\CNT{\textsf{CNt}\xspace}
\newcommand\BDF{\textsf{BDF2}\xspace}
\newcommand\BD{\textsf{BDF1}\xspace}
\newcommand\dX{\dot\X}
\title[Higher-order time-stepping for FSI problems]
{Higher-order time-stepping schemes for fluid-structure interaction problems }
\author[D. Boffi, L. Gastaldi, and S. Wolf]{}
\subjclass{Primary: 65M60, 65M85; Secondary: 65M12, 74F10.}
\keywords{Finite elements, Fluid-structure interactions, Fictitious domain, higher order time stepping, stability estimates}
\email{daniele.boffi@unipv.it}
\email{lucia.gastaldi@unibs.it}
\email{s.wolf@tum.de}
\thanks{$^*$ Corresponding author: Daniele Boffi}
\begin{document}
\maketitle
\centerline{\scshape Daniele Boffi$^*$}
\medskip
{\footnotesize
% please put the address of the first author
 \centerline{Dipartimento di Matematica ``F. Casorati'', University of Pavia}
   \centerline{Pavia, Italy}
} % Do not forget to end the {\footnotesize by the sign }

\medskip

\centerline{\scshape Lucia Gastaldi}
\medskip
{\footnotesize
 % please put the address of the second  and third author
 \centerline{DICATAM, University of Brescia}
   \centerline{Brescia, Italy}
}

\medskip

\centerline{\scshape Sebastian Wolf}
\medskip
{\footnotesize
 % please put the address of the second  and third author
 \centerline{Technische Universit\"at M\"unchen (TUM)}
   \centerline{M\"unchen, Germany }
}

\begin{abstract}
We consider a recently introduced formulation for fluid-structure interaction
problems which makes use of a distributed Lagrange multiplier in the spirit of
the fictitious domain method. In this paper we focus on time integration methods
of second order based on backward differentiation formulae and on the Crank--Nicolson 
method. We show the stability properties of the resulting method; numerical tests confirm the theoretical results.
\end{abstract}
\maketitle
\section{Introduction}

We discuss a scheme involving a fictitious domain approach with a distributed Lagrange multiplier for the modeling of fluid-structure interaction problems~\cite{BCG15,BG17}, which has originated as a natural evolution of the Finite Element Immersed Boundary Method introduced and studied in~\cite{BGH07,BGHP08,BCG11,BG16}. This investigation started from the framework of Peskin's research~\cite{P02} who introduced the (finite difference) Immersed Boundary Method for the modeling of fluid-structure interaction problems.

The project described in this paper has been carried on during the Master thesis of the third author who spent a semester in Pavia within an exchange program between TUM and Pavia.
The aim of the project was to investigate and analyze higher order time schemes for our fluid-structure interaction numerical approach which, so far, had been presented only in combination with low order Euler schemes. On the other hand, in the case of \emph{thick} (codimension zero) solids, the regularity of the solution allows for a convergence in space higher than first order, so that it may pay off to make use of higher order schemes.

The main result of this paper consists in the implementation and in the stability analysis for the second order Backward Differentiation Formula \BDF and for the Crank--Nicolson scheme. We present two possible variants of the Crank--Nicolson scheme, which differ in the treatment of the nonlinear terms.

The structure of our paper is as follows: after introducing the model and its finite element discretization in Section~\ref{se:setting}, we describe different time stepping schemes in Section~\ref{sec:timeDisc}: Backward Euler \BD, \BDF, Crank--Nicolson (version based on midpoint rule \CNM or based on trapezoidal rule \CNT). Finally, in Section~\ref{sec:numExp}, we present several numerical experiments confirming the convergence and the stability proved in the previous section.

\section{Setting of the FSI problem}
\label{se:setting}
We consider a fluid-structure interaction problem consisting of a visco-elastic solid immersed in a fluid. The solid is initially distorted from its equilibrium configuration, so that it tends to return to its equilibrium position. In doing so, the region occupied by the fluid changes its shape, thus inducing a flow which in turn produces a force on the solid, which deforms accordingly. We assume that both the fluid and the solid are incompressible. An extension to our model to compressible solids has been studied in~\cite{BGH18} but is not going to be considered in this paper.

Let $\Omega\subset\RE^d$, with $d=2,3$, be a connected, open, and bounded domain with Lipschitz continuous boundary $\partial\Omega$. For simplicity, we assume that $\Omega$ is a polyhedron. The domain $\Omega$ is split into two non intersecting open domains $\Oft$ and $\Ost$ which represent the regions occupied at time $t$ by fluid and solid, respectively. Hence we have $\overline\Omega=\overline{\Oft}\cup\overline{\Ost}$. We denote by $\Gamma_t$ the interface between $\Oft$ and $\Ost$ and assume that it has empty intersection with the exterior boundary $\partial\Omega$.
Let $\B$ be the reference domain of $\Ost$, and let $\X:\B\to\Ost$ represent the corresponding deformation mapping. Hence a point $x\in\Ost$ is the image at time $t$ of a point $\s\in\B$, that is $\x=\X(\s,t)$. For simplicity, we assume that $\B=\Omega^s_0$ is the initial position of the solid. We denote by $\F=\Grad_s\X$ the deformation gradient and by $J=\det(\F)$ its Jacobian.
\begin{center}
\begin{figure}
\includegraphics[width=.8\textwidth]{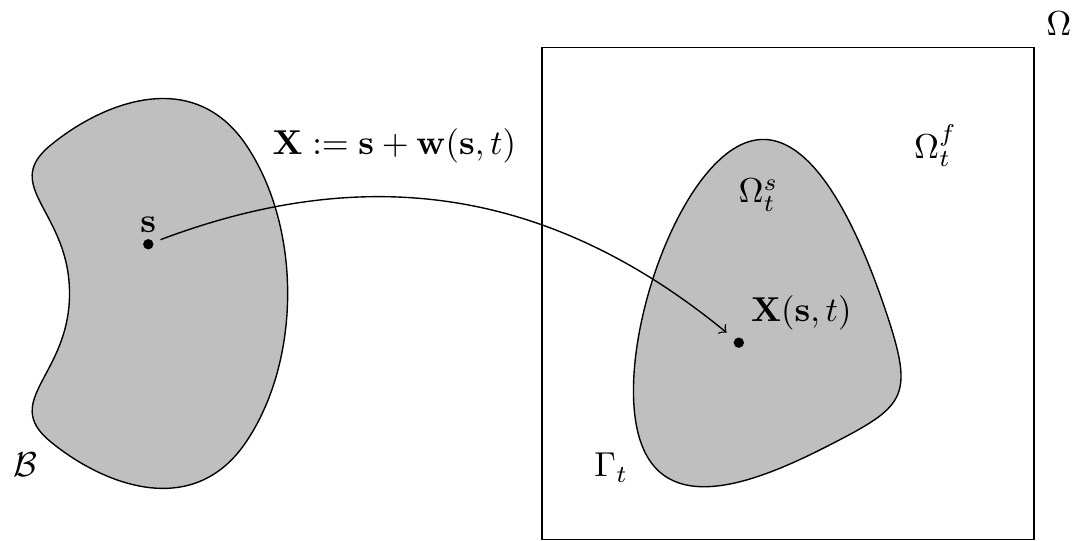}
\caption{Geometrical configuration of the FSI problem}
\end{figure}
\end{center}
We are going to use the following notation: $\u_f$, $p_f$, $\ssigma_f$, and $\rho_f$ denote, respectively, velocity, pressure, stress tensor and density in the fluid. We consider a Newtonian fluid characterized by the usual Navier--Stokes stress tensor

\begin{equation}
\label{eq:NSstresstensor}
\ssigma_f=-p_f\mathbf{I}+\nu_f\Grads\u_f,
\end{equation}
where $\nu_f$ is the fluid viscosity and $\Grads\u=(1/2)\left(\Grad\u_f+(\Grad\u_f)^\top\right)$.
In the fluid, we use an Eulerian description so that the material derivative is given by
\[
\dot\u_f=\frac{\partial\u_f}{\partial t}+\u_f\cdot\Grad\u_f.
\]
In the solid, $\u_s$, $p_s$, and $\rho_s$ stand, respectively, for velocity, pressure, and density. 
In the solid the Lagrangian framework is preferred, and the spatial description of the material velocity reads
\begin{equation}
    \label{eq:materialvel}
    \u_s(\x,t)=\frac{\partial\X(\s,t)}{\partial t}\Big|_{\x=\X(\s,t)}
\end{equation}
so that $\dot\u_s=\partial^2\X/\partial t^2$.
Moreover, we assume that the solid material is viscous-hyperelastic, so that the Cauchy stress tensor is given by the sum of a viscous part 
\begin{equation}
    \label{eq:viscous}
\ssigma_s^v=-p_s\mathbb{I}+\nu_s\Grads\u_s,
\end{equation}
where $\nu_s$ is the viscosity, and an elastic part $\ssigma_s^e$, which can be expressed in terms of the Piola--Kirchhoff stress tensor $\P$:
\begin{equation}
\label{eq:elastic}
    \P(\F(\s,t))=|\F(\s,t)|\ssigma_s^e(\x,t)\F^{-\top}(\s,t)\quad\text{for }\x=\X(\s,t).
\end{equation}
The Piola--Kirchhoff stress tensor is related to the positive energy density $W(\F)$, which characterizes hyperelastic materials, as follows:
\begin{equation}
\label{eq:PK}
(\P(\F(\s,t))_{\alpha i}=
\frac{\partial W}{\partial\F_{\alpha i}}(\F(\s,t))
=\left(\frac{\partial W }{\partial \F}(\F(\s,t))\right)_{\alpha i},
\end{equation}
where $i = 1,\ldots,m$ and $\alpha=1,\ldots,d$. The elastic potential energy of
the body is given by:
\begin{equation}
\label{eq:potenergy}
E\left(\X(t)\right)=\int_\B W(\F(s,t))\ds.
\end{equation}
Assuming that both the fluid and the solid material are incompressible, we have the following mathematical model for the fluid-structure system: 
\begin{equation}
\label{eq:model}
\aligned
&\rho_f\dot\u_f=\divergence\ssigma_f
&&\text{in }\Omega\setminus\B_t\\
&\divergence\u_f=0 &&\text{in }\Omega\setminus\B_t\\
&\rho_s\frac{\partial^2\X}{\partial t^2}
={\divergence} _s(|\F|\ssigma_s^v\F^{-\top}+\P(\F))\ 
&&\text{in }\B\\
&{\divergence}\u_s=0&&\text{in }\B_t\\
&\u_f=\frac{\partial\X}{\partial t}&&\text{on }\Gamma_t\\
&\ssigma_f\n_f=-(\ssigma_s^f+|\F|^{-1}\P\F^\top)\n_s
&&\text{on }\Gamma_t.
\endaligned
\end{equation}
The last two equations in~\eqref{eq:model} represent the transmission condition at the interface $\Gamma_t$.
The model is completed with initial and boundary conditions:
\begin{equation}
    \label{eq:init+bc}
\aligned
&\u_f(0)=\u_{f0} &&\text{in }\Omega_0^f\\
&\u_s(0)=\u_{s0} &&\text{in }\Omega_0^s\\
&\X(0)=\X_0 &&\text{in }\B\\
&\u_f(t)=0 &&\text{on }\partial\Omega.
\endaligned
\end{equation}
Following~\cite{BG17}, we apply a fictitious domain approach by extending the first equation in~\eqref{eq:model} to the whole domain $\Omega$ and by using the following new unknowns:
\begin{equation}
\label{eq:fictitious}
\u=\left\{
\begin{array}{ll}
\u_f&\text{ in } \Oft\\
\u_s&\text{ in } \Ost
\end{array}
\right.\quad
p=\left\{
\begin{array}{ll}
p_f&\text{ in } \Oft\\
p_s&\text{ in } \Ost.
\end{array}
\right.
\end{equation}
The extended velocity and pressure satisfy the following equation all over the domain:
\begin{equation}
\label{eq:NSwhole}
\aligned
&\rho_f\dot\u=\divergence(\nu\Grads\u)-\nabla p &&\quad\text{in }\Omega\\
&\divergence\u=0&&\quad\text{in }\Omega,
\endaligned
\end{equation}
where we set
\begin{equation}
\label{eq:nu}
\nu=\left\{\begin{array}{ll}
\nu_f&\text{ in } \Oft\\
\nu_s&\text{ in } \Ost.
\end{array}
\right.
\end{equation}
The first equation in~\eqref{eq:NSwhole} enforces that $\rho_f\dot\u_s=\divergence\ssigma_s^v$ in $\B_t$.
By taking into account~\eqref{eq:materialvel} and changing variable, this is equivalent to 
\[
\rho_f\frac{\partial^2\X}{\partial t^2}={\divergence} _s(|\F|\ssigma)s^v\F^{-\top}\quad \text{in }\B
\]
which subtracted from the third equation in~\eqref{eq:model} gives
\[
(\rho_s-\rho_f)\frac{\partial^2\X}{\partial t^2}={\divergence} _s\P(\F)\quad \text{in }\B
\]
and the model problem can be rewritten as follows: find $u$, $p$, $\X$ such that
\begin{equation}
\label{eq:modelfictitious}
\aligned
&\rho_f\dot\u=\divergence(\nu\Grads\u)-\nabla p &&\text{in }\Omega\\
&\divergence\u=0 &&\text{in }\Omega\\
&(\rho_s-\rho_f)\frac{\partial^2\X}{\partial t^2}={\divergence} _s\P(\F) &&\text{in }\B\\
&\u(\X(\s,t),t)=\frac{\partial\X(\s,t)}{\partial t}&&\text{on }\B\\
&\ssigma_f\n_f=-(\ssigma_s^v+|\F|^{-1}\P\F^\top)\n_s&&\text{on }\Gamma_t\\
&\u(0)=\u_{0} &&\text{in }\Omega\\
&\X(0)=\X_0 &&\text{in }\B\\
&\u(t)=0 &&\text{on }\partial\Omega.
\endaligned
\end{equation}

We observe that the second equation in~\eqref{eq:modelfictitious} enforces the divergence free constraint both for fluid and solid. In order to arrive to our weak formulation we multiply the first equation by a test function $v\in\Huo$ and integrate by parts the right hand side taking into account~\eqref{eq:fictitious} and the fact that the normal derivatives of $\u$ might jump across $\Gamma_t$, hence we have
\begin{equation}
\label{eq:previous}
\aligned
\int_\Omega \rho_f\dot\u\v\dx&=-\int_{\Omega}\nu\Grads\u:\Grads\v\dx+
\int_{\Omega}p\divergence\v\dx +\int_{\Gamma_t}\nu_f\Grads\u\,\n_f\v\d\gamma\\
&\quad-\int_{\Gamma_t}p\v\cdot\n_f\d\gamma
+\int_{\Gamma_t}\nu_s\Grads\u\,\n_s\v\d\gamma-\int_{\Gamma_t}p\v\cdot\n_s\d\gamma\\
&=-\int_{\Omega}\nu\Grads\u:\Grads\v\dx+\int_{\Omega}p\divergence\v\dx
+\int_{\Gamma_t}|\F|^{-1}\P(\F)\F^\top\,\n_s\v\d\gamma.
\endaligned
\end{equation}
On the other hand, multiplying by a test function $\Y\in\Hub$ the third equation in~\eqref{eq:modelfictitious} and integrating by parts, we obtain
\[
(\rho_s-\rho_f)\int_\B \frac{\partial^2\X}{\partial t^2}\Y\ds=
-\int_\B\P(\F):\grads\Y\ds+\int_{\partial\B}\P(\F)\,\mathbf{N}\Y\d\gamma,
\]
where $\mathbf{N}$ is the outward normal unit vector to $\partial\B$.
By change of variables, the integral on the boundary is equal to the last integral on the right hand side of~\eqref{eq:previous} if we choose $\v(\X(\s,t))=\Y(s)$ for $\s\in\partial\B$.

Let $\LL$ be a functional space to be defined later on and $\c:\LL\times\Hub\to\RE$ a continuous
bilinear form such that
\begin{equation}
\label{eq:cprop}
\c(\mu,\Y)=0\quad\forall\mu\in\LL \text{ implies }\Y=0.
\end{equation}
Possible definitions for $\LL$ and $\c$ are:
\begin{itemize}
\item $\c$ is the duality pairing between $\Hub$ its dual, that is $\LL=(\Hub)'$ and $\c(\mu,\Y)=\langle\mu,\Y\rangle_\B$ for $\mu\in\LL$, $\Y\in\Hub$;
\item $\c$ is the scalar product in $\Hub$, that is $\LL=\Hub$ and $\c(\mu,\Y)=(\mu,\Y)_\B+(\grads\mu,\grads\Y)_\B$ for $\mu,\Y\in\Hub$.
\end{itemize}
Then we introduce $\lambda\in\LL$ satisfying the following relation:
\begin{equation}
\label{eq:lambda}
\c(\lambda,\Y)=\int_{\partial\B}\P(\F)\,\mathbf{N}\Y\d\gamma \quad\forall\Y\in\Hub.
\end{equation}
Using the bilinear form $\c$ we can formulate the kinematic constraint in the fourth equation in~\eqref{eq:modelfictitious} as
\[
\c\left(\mu,\u(\X(\s,t),t)-\frac{\partial(\X(\s,t))}{\partial t}\right)=0
\]
and our problem can be rewritten in the following weak form (see, also, \cite{BCG15,BG17}).
\begin{problem}
\label{pb:pbvar}
For given $\u_0 \in\Huo$ and $\X_0\in W^{1, \infty}(\B)$, find $\u(t) \in\Huo$, $p(t) \in\Ldo$, $\X(t) \in\Hub$, and $\lambda(t) \in \LL$ such that for almost all $t \in (0, T)$:

\begin{equation}
	\label{eq:FSIvarDLM}
	\begin{aligned}
		&\rho_f \left(\derivative{}{t} \u(t),\v\right)_\Omega + b\left(\u(t), \u(t), \v\right) + a\left(\u(t), \v\right)  &&\\
		&\quad- \left(\divergence v, p(t) \right)_\Omega +\c\left(\lambda(t), \v(\X(\cdot, t))\right) = 0 & &\forall \v \in\Huo \\
		&\left( \divergence \u(t), q \right)_\Omega = 0  & &\forall q \in\Ldo
\\
		&\dr \left(\derivative{^2 \X}{t^2}(t), \Y \right)_\B +\left( \P(\F(t)),
\nabla_s \Y\right)_\B  -\c\left( \lambda(t), \Y\right) = 0  & &\forall \Y
\in\Hub \\
		&\c\left(\mu, \u(\X(\cdot, t), t)- \derivative{\X}{t}(t)\right) = 0 & &\forall \mu \in \LL \\
		&\u(x,0) = \u_0(x)  & &\mathrm{in }\ \Omega\\
		&\X(s,0) = \X_0(s) & &\mathrm{in }\ \B.\\
	\end{aligned}
\end{equation}
\end{problem}
In the above problem we have used the following notation: $\dr=\rho_s-\rho_f$, the scalar product in $L^2(D)$ is denoted by $(\cdot,\cdot)_D$, and
\begin{equation}
\label{eq:notation}
\aligned
& a(\u,\v)=(\nu\Grads\u,\Grads\v)_\Omega\\
&b(\u,\v,\w)=\frac{\rho_f}2\left((\u\cdot\Grad\v,\w)-(\u\cdot\Grad\w,\v)\right).
\endaligned
\end{equation}
\begin{remark}
We remark that the unknown $\lambda$ in Problem~\ref{pb:pbvar} plays the role of a Lagrange multiplier associated with the condition which enforces the kinematic constraint, that is the equality of the velocity $\u$ with the solid velocity in the region occupied by the structures, see also~\eqref{eq:materialvel}.
\end{remark}

\begin{remark}
We can introduce an alternative to the third equation in~\eqref{eq:FSIvarDLM} 
by splitting it into a system of two equations of first order in time, and
by introducing a new unknown $\dX(t)=\frac{\partial\X(t)}{\partial t}$:
\begin{equation}
\label{eq:splitting}
\aligned
&(\dX(t),\w)_\B=\left(\frac{\partial\X(t)}{\partial t},\w\right)_\B
&&\quad\forall\w\in L^2(\B)\\
&\left(\frac{\partial\dX(t)}{\partial t},\z\right)_\B+(\P(\F(t)),\grads\z)_\B
-\c(\lambda(t),\z)=0&&\quad\forall\z\in\Hub.
\endaligned
\end{equation}
This formulation is more suited when a second order time marching scheme is
used, since we do not need to introduce a second order approximation of the
second time derivative. 
\end{remark}

By choosing properly the test functions in~\eqref{eq:FSIvarDLM}, one can obtain the following energy estimate~\cite{BG17} :
\begin{proposition}
\label{pr:stab-cont}
Let us assume that $\dr\ge0$, that the potential energy density $W$ is a $C^1$ convex function
over the set of second order tensors, and that for almost every $t\in[0,T]$ the solution of Problem~\ref{pb:pbvar} is such that $\X(t)\in(W^{1,\infty}(\B))^d$
with $\frac{\partial \X}{\partial t}(t)\in L^2(\B)^d$,
then the following equality holds true
\begin{equation}
\label{eq:energyest}
\frac{\rho_f}2\frac{d}{dt}||\u(t)||^2_\Omega+\nu||\Grads\u(t)||^2_\Omega+
\frac{\dr}2\frac{d}{dt}\left\|\frac{\partial \X(t)}{\partial t}\right\|^2_\B
+\frac{d}{dt}E(\X(t))=0.
\end{equation}
\end{proposition}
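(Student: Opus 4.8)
The plan is to apply the standard energy method: test each equation of Problem~\ref{pb:pbvar} with the corresponding solution component and add the four resulting identities. Concretely, I would take $\v=\u(t)$ in the momentum equation, $q=p(t)$ in the incompressibility equation, $\Y=\frac{\partial\X}{\partial t}(t)$ in the solid equation, and $\mu=\lambda(t)$ in the kinematic constraint, and then sum the four relations in~\eqref{eq:FSIvarDLM}. The goal is to show that every coupling and constraint contribution cancels, leaving exactly the four terms displayed in~\eqref{eq:energyest}.

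First I would dispose of the terms that vanish. Choosing $q=p(t)$ in the second equation gives $(\divergence\u(t),p(t))_\Omega=0$, so the pressure term $-(\divergence\v,p(t))_\Omega$ arising in the momentum equation with $\v=\u(t)$ disappears. The convective term vanishes by the skew-symmetry built into the definition~\eqref{eq:notation}: with all three arguments equal, $b(\u,\u,\u)=\frac{\rho_f}{2}\big((\u\cdot\Grad\u,\u)-(\u\cdot\Grad\u,\u)\big)=0$. Finally, the two multiplier contributions combine: the momentum equation supplies $+\c(\lambda(t),\u(\X(\cdot,t),t))$ and the solid equation supplies $-\c(\lambda(t),\frac{\partial\X}{\partial t}(t))$, whose sum equals $\c\big(\lambda(t),\u(\X(\cdot,t),t)-\frac{\partial\X}{\partial t}(t)\big)$, which is zero by the constraint equation tested with $\mu=\lambda(t)$.

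Next I would rewrite the surviving terms as time derivatives of energies. The inertial terms give $\rho_f\big(\frac{\partial\u}{\partial t},\u\big)_\Omega=\frac{\rho_f}{2}\frac{d}{dt}\|\u(t)\|_\Omega^2$ and $\dr\big(\frac{\partial^2\X}{\partial t^2},\frac{\partial\X}{\partial t}\big)_\B=\frac{\dr}{2}\frac{d}{dt}\big\|\frac{\partial\X}{\partial t}(t)\big\|_\B^2$, while the viscous term is $a(\u,\u)=\nu\|\Grads\u(t)\|_\Omega^2$. The elastic term requires the identity $\big(\P(\F(t)),\grads\frac{\partial\X}{\partial t}(t)\big)_\B=\frac{d}{dt}E(\X(t))$: differentiating~\eqref{eq:potenergy} under the integral sign and using the chain rule together with the definition~\eqref{eq:PK} of $\P=\partial W/\partial\F$, one gets $\frac{d}{dt}W(\F)=\frac{\partial W}{\partial\F}:\frac{\partial\F}{\partial t}=\P(\F):\grads\frac{\partial\X}{\partial t}$, since $\F=\Grad_s\X$ implies $\frac{\partial\F}{\partial t}=\grads\frac{\partial\X}{\partial t}$. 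Assembling all the remaining contributions then reproduces~\eqref{eq:energyest} exactly.

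The main obstacle is the rigorous justification of this elastic-energy identity together with the admissibility of the test function $\Y=\frac{\partial\X}{\partial t}(t)$. To differentiate $E(\X(t))$ under the integral and to interchange $\frac{\partial}{\partial t}$ with $\grads$ one needs adequate regularity in both space and time, and this is precisely where the hypotheses enter: the $C^1$ regularity of $W$ makes the chain rule licit and identifies $\P$ with $\partial W/\partial\F$, the bounds $\X(t)\in(W^{1,\infty}(\B))^d$ and $\frac{\partial\X}{\partial t}(t)\in L^2(\B)^d$ control $\F$ and legitimize differentiation under the integral sign, while the assumptions $\dr\ge0$ and the convexity of $W$ guarantee that the resulting quantity is a genuine, nonnegative energy. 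One must also verify that $\frac{\partial\X}{\partial t}(t)$ is an admissible element of $\Hub$ so that it may serve as $\Y$; once this technical point is settled, the remainder of the argument is the direct cancellation computation outlined above.
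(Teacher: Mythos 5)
Your proposal is correct and coincides with the approach the paper intends: Proposition~\ref{pr:stab-cont} is stated as following ``by choosing properly the test functions'' (with details deferred to~\cite{BG17}), and those choices are precisely yours ($\v=\u(t)$, $q=p(t)$, $\Y=\partial\X/\partial t$, $\mu=\lambda(t)$, cancellation of the multiplier terms via the constraint, chain rule for the elastic term), the same pattern the paper carries out explicitly in the discrete proofs of Propositions~\ref{pr:BDF2stab} and~\ref{pr:CNM}. One minor remark: the convexity of $W$ is not actually needed for the continuous \emph{equality}~\eqref{eq:energyest}, since the chain rule gives it exactly; convexity becomes essential only in the discrete settings, where it turns the elastic term into a one-sided bound.
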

In the above proposition $\|\cdot\|_D$ stands for the norm in $L^2(D)$.
We observe that the above proposition still holds true when the alternative
formulation in~\eqref{eq:splitting} is used.

\subsection{Finite element discretization}
\label{se:FEM}
Let $\T_h$ and $\T_h^\B$ be regular meshes in $\Omega$ and $\B$, respectively,
which are independent one from each other. The corresponding mesh sizes will be
denoted by $h_f$ and $h_s$, respectively. We consider two finite element spaces
$\V_h\subset\Huo$ and $Q_h\subset\Ldo$ such that the pair $(\V_h,Q_h)$ satisfies
the usual inf-sup condition for the Stokes equations. 
We assume that $\T_h^\B$ contains only simplices and introduce the space of continuous piecewise affine functions on $\T_h^\B$
\begin{equation}
\label{eq:Sh}
\S_h=\{\Y\in\Hub: \Y|_T\in\mathcal{P}^1(T)\ \forall T\in\T_h^\B\}.
\end{equation}
In order to discretize $\LL$, we set $\LL_h=\S_h$. With this definition we have that when $\LL=(\Hub)'$ and $\c$ is the duality pairing, we can compute easily $\c$ using the scalar product in $L^2(\B)$.

Then the discrete counterpart of Problem~\ref{pb:pbvar} reads as follows.
\begin{problem}
\label{pb:pbvarh}
For given $\u_{0h}\in\V_h$ and $\X_{0h}\in\S_h$, find $\u_h(t)\in\V_h$, $p_h(t)\in Q_h$, $\X_h(t)\in\S_h$, $\lambda_h(t)\in\LL_h$ such that for almost all $t \in (0, T)$:

\begin{equation}
	\label{eq:FSIvarDLMh}
	\begin{aligned}
		&\rho_f \left(\derivative{}{t}\u_h(t),\v_h\right)_\Omega + b\left(\u_h(t),\u_h(t),\v_h\right) + a\left(\u_h(t), \v_h\right)  &&\\
		&\quad- \left(\divergence\v_h, p_h(t) \right)_\Omega +\c\left(\lambda_h(t), \v_h(\X_h(\cdot, t))\right) = 0 & &\forall \v_h\in\V_h \\
		&\left(\divergence\u_h(t), q_h\right)_\Omega = 0  & &\forall q_h\in Q_h\\
		&\dr \left(\derivative{^2 \X_h}{t^2}(t), \Y_h\right)_\B +\left( \P(\F_h(t)), \grads\Y_h\right)_\B  -\c\left( \lambda_h(t), \Y_h\right) = 0  & &\forall \Y_h\in\S_h \\
		&\c\left(\mu_h, \u_h(\X_h(\cdot, t), t)- \derivative{\X_h}{t}(t)\right) = 0 & &\forall \mu_h\in\LL_h \\
		&\u_h(x,0) = \u_{0h}(x)  & &\mathrm{in }\  \Omega\\
		&\X_h(s,0) = \X_{0h}(s) & &\mathrm{in }\  \B.
	\end{aligned}
\end{equation}
\end{problem}
This semi discrete problem inherits the same energy estimate as the continuous
one, which can be proved with the same technique.

The system~\eqref{eq:splitting} can be discretized as follows: find
$\dX_h(t)\in\S_h$ and $\X_h(t)\in\S_h$ such that
\begin{equation}
\label{eq:splittingh}
\aligned
&(\dX_h(t),\w_h)_\B=\left(\frac{\partial\X_h(t)}{\partial t},\w_h\right)_\B
&&\quad\forall\w_h\in\S_h\\
&\left(\frac{\partial\dX_h(t)}{\partial t},\z_h\right)_\B
+(\P(\F_h(t)),\grads\z_h)_\B
-\c(\lambda_h(t),\z_h)=0&&\quad\forall\z_h\in\S_h.
\endaligned
\end{equation}
We observe that the first equation can be solved exactly. In the following we shall use these two equations instead of the third equation in~\eqref{eq:FSIvarDLMh} when we apply higher order time marching schemes.

\section{Higher-order time-stepping method}
\label{sec:timeDisc}
The system~\eqref{eq:FSIvarDLMh} is a system of ODEs with some algebraic
constraints in $\RE^n$, where $n = \dim(\V_h) + \dim(Q_h) + \dim(\S_h) +
\dim(\LL_h)$. The numerical solution of Problem~\ref{pb:pbvarh} can be computed by using some ODE/DAE solver. In order to avoid to use excessively small
time steps, in \cite{BCG15,BG17} the Backward Euler formula was applied for the
time-integration, and the unconditional stability of the scheme has been
proved. The aim of this paper is to introduce methods which achieve second
order convergence in time and are unconditionally stable. In particular, we
shall consider two one-step methods - based on the midpoint and trapezoidal
rules - and the \BDF method and analyze their stability. We observe that the
resulting fully discrete scheme is nonlinear, and we shall discuss how the
solution can be obtained.

\subsection{Backward Euler}
Before entering into the details of higher order methods, we recall the Backward Euler method analyzed in \cite{BCG15}. We subdivide the time interval $[0,T]$ into $N$ equal parts with size $\dt=T/N$ and subdivision points $t_n=n\dt$. Moreover, for a certain function $y(t)$, we set $y^n=y(t_n)$ and use the following finite difference in order to approximate the time derivatives:
\begin{equation}
\label{eq:Finitediff}
\aligned
&\frac{\partial y(t_{n+1})}{\partial t}\approx\frac{y^{n+1} -y^{n}}{\dt}\\
&\frac{\partial^2 y(t_{n+1})}{\partial t^2}\approx\frac{y^{n+1} -2y^{n}+y^{n-1}}{\dt^2}.
\endaligned
\end{equation}
Notice that both approximations are of first order.

The fully discrete version of Problem~\ref{pb:pbvar} using the Backward Euler scheme is the following one.
\begin{problem}
	Given $\u_{0h}\in\V_h$ and $\X_{0h}\in\S_h$, for all $n=1,\dots, N$ find $\u_h^n\in\V_h$, $p_h^n\in Q_h$, $\X_h^n\in\S_h$, and $\lambda_h^n\in\LL_h$ fulfilling:
	
	\begin{equation}
		\label{eq:FSIdiscBDF1}
		\begin{aligned}
			&\rho_f \left( \frac{\u_h^{n+1} - \u_h^{n}}{\dt},\v_h\right)_\Omega + b\left(\u_h^{n+1}, \u_h^{n+1}, \v_h\right)+ a\left(\u_h^{n+1}, \v_h\right)\\
			&\quad - \left( \divergence \v_h, p_h^{n+1}\right)_\Omega+
\c\left(\lambda_h^{n+1}, \v_h(\X_h^{n+1})\right) = 0 & & \forall \v_h \in\V_h \\
	&\left( \divergence \u_h^{n+1}, q_h \right)_\Omega = 0 
& & \forall q_h \in Q_h\\
&\delta\rho \left( \frac{\X_h^{n+1} - 2\X_h^{n} +
\X_h^{n-1}}{\dt^2}, \Y_h \right)_\B \\
&\quad+\left( \P(\F_h^{n+1}), \nabla_s \Y_h\right)_\B
- \c\left(\lambda_h^{n+1}, \Y_h\right) = 0& & \forall \Y_h \in\S_h \\
&\c\left(\mu_h, \u_h^{n+1}(\X_h^{n+1}) - \frac{\X_h^{n+1} -\X_h^n}{\dt}\right) = 0 & & \forall \mu_h \in \LL_h \\
			&\u_h^0 = \u_{0h}, \quad	\X_h^0 = \X_{0h}.
		\end{aligned}
	\end{equation}
\end{problem}

We recall the stability estimate proven in \cite{BCG15}.

\begin{proposition}
	Let the material behavior be governed by an energy density $W$ which is $\mathcal{C}^1$ and convex. Let $\u_h^n\in\V_h$ and $\X_h^n\in\S_h$, $ n = 1, \dots N$ be solutions of~\eqref{eq:FSIdiscBDF1}. Then the following estimate holds true:
	
	\begin{equation*}
		\begin{aligned}
			&\frac{\rho_f}{2 \dt} \left( \|\u^{n+1}_h\|_\Omega^2 - \|\u^n_h\|_\Omega^2\right)
			+ \nu \| \nabla_{sym} \u^{n+1}_h\|_\Omega^2  +\frac{E(\X^{n+1}_h) - E(\X^n_h)}{\dt}\\
			&+ \frac{\delta \rho}{2\dt} \left[ \left\|\frac{\X_h^{n+1} - \X_h^n}{\dt}\right\|_\B^2 - \left\|\frac{\X_h^n - \X_h^{n-1}}{\dt}\right\|_\B^2 \right]  \leq 0.
		\end{aligned}
	\end{equation*}
\end{proposition}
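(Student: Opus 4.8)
The plan is to derive the stability estimate by testing the four equations in the Backward Euler system~\eqref{eq:FSIdiscBDF1} with the natural choices of test functions and then summing. Specifically, I would choose $\v_h=\u_h^{n+1}$ in the momentum equation, $q_h=p_h^{n+1}$ in the incompressibility equation, $\Y_h=(\X_h^{n+1}-\X_h^n)/\dt$ in the solid equation, and finally $\mu_h=\lambda_h^{n+1}$ in the constraint equation. The key structural fact I will exploit is that the coupling terms involving $\c(\lambda_h^{n+1},\cdot)$ cancel: the constraint equation with $\mu_h=\lambda_h^{n+1}$ gives $\c(\lambda_h^{n+1},\u_h^{n+1}(\X_h^{n+1}))=\c(\lambda_h^{n+1},(\X_h^{n+1}-\X_h^n)/\dt)$, which is exactly what is needed to match the Lagrange-multiplier term appearing in the momentum equation against the one appearing in the solid equation (with the opposite sign). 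So after summing, the two $\c$-terms disappear entirely.

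With the coupling removed, I would handle the remaining terms one at a time. The pressure term $(\divergence\v_h,p_h^{n+1})_\Omega$ with $\v_h=\u_h^{n+1}$ is killed by the incompressibility equation tested with $q_h=p_h^{n+1}$. The convection term $b(\u_h^{n+1},\u_h^{n+1},\u_h^{n+1})$ vanishes identically by the skew-symmetric definition of $b$ in~\eqref{eq:notation}, since $b(\u,\v,\v)=0$ for all arguments. The viscous term $a(\u_h^{n+1},\u_h^{n+1})=\nu\|\Grads\u_h^{n+1}\|_\Omega^2$ is already in the desired form and is nonnegative. For the two discrete time-derivative terms I would use the elementary algebraic identity $(a-b)\cdot a=\tfrac12(|a|^2-|b|^2+|a-b|^2)\ge\tfrac12(|a|^2-|b|^2)$, applied first to $\rho_f(\u_h^{n+1}-\u_h^n,\u_h^{n+1})_\Omega$ to produce the fluid kinetic-energy difference $\tfrac{\rho_f}{2\dt}(\|\u_h^{n+1}\|_\Omega^2-\|\u_h^n\|_\Omega^2)$, and then to the second-order difference term, which after writing $\X_h^{n+1}-2\X_h^n+\X_h^{n-1}=(\X_h^{n+1}-\X_h^n)-(\X_h^n-\X_h^{n-1})$ yields the discrete kinetic-energy difference of the solid velocity $\tfrac{\dr}{2\dt}[\|(\X_h^{n+1}-\X_h^n)/\dt\|_\B^2-\|(\X_h^n-\X_h^{n-1})/\dt\|_\B^2]$ (here I discard the nonnegative remainder terms, which is where the inequality rather than equality enters, together with the sign assumption $\dr\ge0$).

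The one term requiring genuine care is the elastic contribution $(\P(\F_h^{n+1}),\grads\Y_h)_\B$ with $\Y_h=(\X_h^{n+1}-\X_h^n)/\dt$. I expect this to be the main obstacle. The goal is to bound it below by the discrete energy difference $(E(\X_h^{n+1})-E(\X_h^n))/\dt$. Since $\grads\Y_h=(\F_h^{n+1}-\F_h^n)/\dt$ and $\P(\F)=\partial W/\partial\F$ by~\eqref{eq:PK}, this term equals $\tfrac{1}{\dt}(\tfrac{\partial W}{\partial\F}(\F_h^{n+1}),\F_h^{n+1}-\F_h^n)_\B$. The convexity hypothesis on $W$ gives the first-order inequality $W(\F_h^n)\ge W(\F_h^{n+1})+\tfrac{\partial W}{\partial\F}(\F_h^{n+1}):(\F_h^n-\F_h^{n+1})$, which rearranges precisely to $\tfrac{\partial W}{\partial\F}(\F_h^{n+1}):(\F_h^{n+1}-\F_h^n)\ge W(\F_h^{n+1})-W(\F_h^n)$. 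Integrating over $\B$ and recalling the definition of $E$ in~\eqref{eq:potenergy}, I obtain $(\P(\F_h^{n+1}),\grads\Y_h)_\B\ge(E(\X_h^{n+1})-E(\X_h^n))/\dt$, as required. Collecting all the pieces and assembling the signs then delivers the claimed estimate.
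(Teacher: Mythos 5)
Your proof is correct and follows essentially the same strategy the paper uses (and attributes to \cite{BCG15}): test with $\u_h^{n+1}$, $p_h^{n+1}$, $\Y_h=(\X_h^{n+1}-\X_h^n)/\dt$, and $\mu_h=\lambda_h^{n+1}$ so the coupling terms cancel, apply the identity $(a-b)\cdot a=\tfrac12(|a|^2-|b|^2+|a-b|^2)$ to the discrete time derivatives, and use convexity of $W$ to bound the elastic term below by the energy difference --- exactly the argument mimicked in the proofs of Propositions~\ref{pr:BDF2stab} and~\ref{pr:CNM}. Your observation that $\dr\ge0$ is implicitly needed is also accurate, since that hypothesis appears in the analogous propositions but was omitted from this statement.
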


This system is highly nonlinear and hence not immediate to solve. It can be solved by a
fixed-point method or by replacing some quantities at time $t_{n+1}$ with their
known value at time $t_n$. For example, in~\cite{BCG15} the latter approach has
been preferred; the value of $\v(\X_h^{n+1})$ in the first equation has been
replaced by $\v(\X^n_h)$ and, similarly, $\u^{n+1}_h(\X^{n+1}_h)$ by
$\u^{n+1}_h(\X^n_h)$.
Moreover, as it is usual in the discretization of the Navier--Stokes equations,
the first argument in the trilinear form $b$ has been evaluated at time $t_n$.
Notice that the same stability estimate as for the fully implicit scheme has
been proved. We observe also that this modification introduces an approximation
of first order, which should not affect the accuracy of the Backward Euler
scheme, since it is of first order, too. 

At the first step $n=0$, the approximation of the second time derivative
requires the knowledge of $\X_h^{-1}$. Following~\cite{BG17},
we can compute this value using the kinematic constraint and the initial data 
\begin{equation*}
	\c\left(\mu_h, \u_h^0(\X_h^0) - \frac{\X_h^0 - \X_h^{-1}}{\dt}\right) = 0 \quad \forall \mu_h \in \LL_h.
\end{equation*}
\subsection{\BDF method}

Backwards differentiation formulae (BDF) are popular multistep methods to solve
stiff ODE problems. They can be derived by approximating the time derivative at
time $t_{n+1}$ with finite differences with order greater than or equal to 1.
In particular, using a first order finite difference we have the Backward Euler
method. In this paper we focus on the second order formula \BDF, hence, for a
certain function $y(t)$, we approximate the time derivative with  
\begin{equation}
\label{eq:BDF2}
	\frac{\partial y(t_{n+1})}{\partial t}\approx\frac{3 y^{n+1} - 4 y^{n} + y^{n-1}}{\dt}.
\end{equation}

When applied to ODEs, it is well-known that \BDF is convergent of order 2 and A-stable,
(see for more detail~\cite[Chapter 7]{DB08}). BDF methods have been successfully applied to the Navier--Stokes equations~\cite{IYD17}, to nonlinear structural mechanics~\cite{D10}, to electro-magnetic problems~\cite{OFI10}, and to Stokes--Darcy flow~\cite{CGSW12}. 

The application of the \BDF method to Problem~\ref{pb:pbvarh} gives the following formulation.
\begin{problem}
\label{pb:BDF2}
Given $\u_{0h} \in\V_h$, $\X_{0h} \in \S_h$, for all $n = 1, \dots, N$ find
$\u_h^n \in\V_h$, $p_h^n \in Q_h$, $\X_h^n \in\S_h$, $\dX_h^n\in\S_h$ and
$\lambda_h^n \in \LL_h$ fulfilling:
\begin{subequations}
\begin{alignat}{2}
&\rho_f \left( \frac{3\u_h^{n+1} - 4\u_h^{n}+\u_h^{n-1}}{2
\dt},\v_h\right)_\Omega + b\left(\u_h^{n+1}, \u_h^{n+1}, \v_h\right)\notag&&\\
&\qquad+ a\left(\u_h^{n+1}, \v_h\right)- \left( \divergence \v_h,
p_h^{n+1}\right)_\Omega\notag&&\\
&\qquad+ \c\left(\lambda_h^{n+1}, \v_h(\X_h^{n+1})\right) = 0 & & \forall
\v_h \in\V_h 
\label{eq:BDF21}\\
&\left( \divergence \u_h^{n+1}, q_h \right)_\Omega = 0  & &\forall q_h\in Q_h 
\label{eq:BDF22}\\
&(\dX_h^{n+1},\w_h)_\B=
\left(\frac{3\X_h^{n+1}-4\X_h^n+\X_h^{n-1}}{2\dt},\w_h\right)_\B
&&\forall\w_h\in\S_h
\label{eq:BDF23}\\
&\delta\rho \left( \frac{3\dX_h^{n+1}-4\dX_h^{n} +\dX_h^{n-1}}{2\dt},
\Y_h \right)_\B \notag&&\\
&\qquad +\left( \P(\F_h^{n+1}), \nabla_s \Y_h\right)_\B  - \c\left(
\lambda_h^{n+1}, \Y_h\right)= 0& & \forall \Y_h \in S_h 
\label{eq:BDF24}\\
&\c\left(\mu_h, \u_h^{n+1}(\X_h^{n+1}) - \frac{3\X_h^{n+1} -4 \X_h^n +
\X_h^{n-1}}{2 \dt}\right) = 0 & & \forall \mu_h \in \LL_h 
\label{eq:BDF25}\\
&\u_h^0 = \u_{0h},\quad		\X_h^0 = \X_{0h}.
\label{eq:BDF26}
\end{alignat}
\end{subequations}
\end{problem}
To start the computations, we additionally need the quantities $\u_h^1$ and
$\X_h^1$ which are usually obtained by a one-step method. To achieve second
order consistency theoretically, we need to apply a method of second order to
the first step. This can be done, for instance, by using the Crank--Nicolson scheme, which is
analyzed in this paper as well. In our numerical experiments we
also tried a start-up with the backward Euler method, which also produced
second order convergence.

In the following proposition, we prove an energy estimate for the scheme
described by Equations~\eqref{eq:BDF21}-\eqref{eq:BDF26}.

\begin{proposition}
\label{pr:BDF2stab}
Assume that $\delta \rho \geq 0$ and that the solids behavior is linear:
$\P(\F) = \kappa \F$.
Let $\u_h^n$, $\X_h^n$, $ n = 1, \dots N$, be solutions of Problem~\ref{pb:BDF2},
then the following estimate holds true:
\begin{equation}
\label{eq:BDF2stab}
\begin{aligned}
&\frac{\rho_f}{4\dt} \left[ \left\|\u_h^{n+1}\right\|_\Omega^2 + \left\|2
\u_h^{n+1} - \u_h^n\right\|_\Omega^2 - \left\|\u_h^n\right\|_\Omega^2 -
\left\|2 \u_h^n - \u_h^{n-1}\right\|_\Omega^2\right.\\
&\qquad	\left.	+ \left\|\u_h^{n+1}-2\u_h^n + \u_h^{n-1}\right\|_\Omega^2\right]
+ \nu \left\|\Grads \u_h^{n+1}\right\|_\Omega^2  \\
&\qquad	+ 
\frac{\dr}{4\dt^2}\left(\|\dX_h^{n+1}\|_\B^2+\|2\dX_h^{n+1}-\dX_h^n\|^2_\B\right.\\
&\qquad\left.
-\|\dX_h^n\|^2_\B-\|2\dX_h^n-\dX_h^{n-1}\|^2_\B
+\|\dX_h^{n+1}-2\dX_h^n+\dX_h^{n-1}\|^2_\B\right) \\
&\qquad	+
\frac{\kappa}{4\dt}
\left(\|\F_h^{n+1}\|_\B^2+\|2\F_h^{n+1}-\F_h^n\|^2_\B\right.\\
&\qquad\left.
-\|\F_h^n\|^2_\B-\|2\F_h^n-\F_h^{n-1}\|^2_\B
+\|\F_h^{n+1}-2\F_h^n+\F_h^{n-1}\|^2_\B\right)\le0.
		\end{aligned}
	\end{equation}
\end{proposition}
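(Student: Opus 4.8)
The proof will rest on the ``G-stability'' algebraic identity for the second-order backward difference. For any reals (and, by bilinearity, for any vectors in an inner-product space) one has
\begin{equation*}
(3a-4b+c)\,a=\tfrac12\Big(a^2+(2a-b)^2-b^2-(2b-c)^2+(a-2b+c)^2\Big),
\end{equation*}
which is verified by expanding both sides. Applied in $L^2$, with $a,b,c$ replaced by three consecutive time levels and the product replaced by the inner product, it turns every term of the form $\big(\tfrac{3y^{n+1}-4y^{n}+y^{n-1}}{2\dt},y^{n+1}\big)$ into exactly the five-norm combination appearing in~\eqref{eq:BDF2stab}, carrying the prefactor $1/(4\dt)$. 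This is the mechanism that produces the telescoping quantities together with the nonnegative remainder $\|y^{n+1}-2y^{n}+y^{n-1}\|^2$, and recognizing it is the whole point.

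First I would take the test functions $\v_h=\u_h^{n+1}$ in~\eqref{eq:BDF21}, $q_h=p_h^{n+1}$ in~\eqref{eq:BDF22}, $\Y_h=\dX_h^{n+1}$ in~\eqref{eq:BDF24}, and $\mu_h=\lambda_h^{n+1}$ in~\eqref{eq:BDF25}, and add the resulting scalar equations. Three families of terms then drop out. The pressure term $-(\divergence\u_h^{n+1},p_h^{n+1})_\Omega$ is annihilated by~\eqref{eq:BDF22}. The convective term vanishes because $b(\u,\v,\v)=0$, which is immediate from the skew-symmetric definition in~\eqref{eq:notation}, so $b(\u_h^{n+1},\u_h^{n+1},\u_h^{n+1})=0$. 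For the Lagrange-multiplier terms I would first observe that, since both $\dX_h^{n+1}$ and $\tfrac{3\X_h^{n+1}-4\X_h^n+\X_h^{n-1}}{2\dt}$ lie in $\S_h$ and coincide as functionals on $\S_h$ by~\eqref{eq:BDF23}, they are in fact equal; hence the constraint~\eqref{eq:BDF25} tested with $\lambda_h^{n+1}$ yields $\c(\lambda_h^{n+1},\u_h^{n+1}(\X_h^{n+1}))=\c(\lambda_h^{n+1},\dX_h^{n+1})$, which cancels exactly the term $-\c(\lambda_h^{n+1},\dX_h^{n+1})$ coming from~\eqref{eq:BDF24} with $\Y_h=\dX_h^{n+1}$.

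What remains is a sum of three inertial/elastic contributions plus the viscous dissipation. The fluid term $\rho_f\big(\tfrac{3\u_h^{n+1}-4\u_h^{n}+\u_h^{n-1}}{2\dt},\u_h^{n+1}\big)_\Omega$ and the solid term $\dr\big(\tfrac{3\dX_h^{n+1}-4\dX_h^{n}+\dX_h^{n-1}}{2\dt},\dX_h^{n+1}\big)_\B$ are handled directly by the identity above, producing the first and the $\dX_h$ brackets of~\eqref{eq:BDF2stab}. The viscous term is $a(\u_h^{n+1},\u_h^{n+1})=\nu\|\Grads\u_h^{n+1}\|_\Omega^2\ge0$. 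For the elastic term I would use linearity, $\P(\F)=\kappa\F$, together with the commutation of the material gradient with the time difference: since $\dX_h^{n+1}=\tfrac{3\X_h^{n+1}-4\X_h^n+\X_h^{n-1}}{2\dt}$ and $\F_h=\grads\X_h$, we get $\grads\dX_h^{n+1}=\tfrac{3\F_h^{n+1}-4\F_h^{n}+\F_h^{n-1}}{2\dt}$, so that $(\P(\F_h^{n+1}),\grads\dX_h^{n+1})_\B=\kappa\big(\F_h^{n+1},\tfrac{3\F_h^{n+1}-4\F_h^{n}+\F_h^{n-1}}{2\dt}\big)_\B$, and the identity applied to $\F_h$ delivers the last bracket. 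Collecting everything reproduces~\eqref{eq:BDF2stab}; note that all discarded terms are exactly zero, so one in fact obtains equality, whence the stated inequality.

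I expect the elastic term to be the crux, and it is exactly where the linearity hypothesis is forced. For backward Euler the solid test function is the plain increment, and convexity of $W$ supplies the one-sided estimate $(\P(\F_h^{n+1}),\F_h^{n+1}-\F_h^n)_\B\ge E(\X_h^{n+1})-E(\X_h^n)$, which telescopes the energy for any convex $W$. No comparable one-sided convexity bound is available for the three-level combination $3\F^{n+1}-4\F^n+\F^{n-1}$, so the clean quadratic G-stability structure is recovered only when $W$ is itself quadratic, i.e. $\P$ linear; this is precisely why the statement restricts to $\P(\F)=\kappa\F$. The remaining points are purely formal: verifying the algebraic identity once, and checking that $\u_h^{n+1}\circ\X_h^{n+1}\in\Hub$ so that $\c(\lambda_h^{n+1},\u_h^{n+1}(\X_h^{n+1}))$ is well defined.
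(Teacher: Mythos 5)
Your proposal is correct and follows essentially the same route as the paper's proof: test with $\u_h^{n+1}$, $p_h^{n+1}$, $\lambda_h^{n+1}$ and (using that \eqref{eq:BDF23} forces $\dX_h^{n+1}=\tfrac{3\X_h^{n+1}-4\X_h^n+\X_h^{n-1}}{2\dt}$ exactly in $\S_h$) with $\Y_h=\dX_h^{n+1}$, cancel the multiplier terms, and apply the identity \eqref{eq:formula} to the fluid, solid-inertia, and (by linearity of $\P$) elastic terms. Your closing remark on why convexity alone cannot handle the three-level difference, so that $\P(\F)=\kappa\F$ is genuinely needed here, is a correct observation consistent with the paper's hypotheses.
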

\begin{proof}
We mimic the proof of~\cite[Prop.~3]{BCG15} and use some useful tricks presented in~\cite{IYD17}.
We test~\eqref{eq:BDF21} with $\u_h^{n+1}$:
	
	\begin{equation*}
		\begin{aligned}
			&\rho_f \left( \frac{3\u_h^{n+1} - 4\u_h^{n}+\u_h^{n-1}}{2 \dt},\u_h^{n+1}\right)_\Omega + b\left(\u_h^{n+1}, \u_h^{n+1}, \u_h^{n+1}\right)  \\
			&\quad+ a\left(\u_h^{n+1}, \u_h^{n+1}\right) - \left( \divergence \u_h^{n+1}, p_h^{n+1}\right)_\Omega +\c\left(\lambda_h^{n+1}, \u_h^{n+1}(\X_h^{n+1})\right)  = 0.
		\end{aligned} 
	\end{equation*}
We remind that the trilinear form $b$ vanishes when the last two arguments
coincide. The third term reduces to 
$a\left(\u_h^{n+1}, \u_h^{n+1}\right) = \nu \left\| \Grads\u_h^{n+1}
\right\|_\Omega^2$. Then by testing~\eqref{eq:BDF22} with $p_h^{n+1}$,
we see that the divergence term vanishes.
Applying the following formula to the first term 
\begin{equation}
\label{eq:formula}
\frac{1}{2} (3a - 4b + c) a = \frac14\left(a^2 +(2a -b)^2 
- b^2 - (2b -c)^2 +(a - 2b + c)^2\right)
\end{equation}
	with $a = \u_h^{n+1}$, $b = \u_h^n$, $c = \u_h^{n-1}$, we obtain
\begin{equation*}
	\begin{aligned}
	&\left( \frac{3\u_h^{n+1} - 4\u_h^{n}+\u_h^{n-1}}{2 \dt},\u_h^{n+1}\right)_\Omega = 
	\frac{1}{4\dt} \left( \left\| \u_h^{n+1}\right\|_\Omega^2 + \left\| 2\u_h^{n+1} - \u_h^n\right\|_\Omega^2 \right.\\
	&\quad\left.-\left\| \u_h^n \right\|_\Omega^2 - \left\| 2\u_h^n - \u_h^{n-1}\right\|_\Omega^2 + \left\| \u_h^{n+1} - 2 \u_h^n + \u_h^{n-1}\right\|_\Omega^2 \right).
	\end{aligned}
\end{equation*}
We are now left with the treatment of the term involving $\c$. We take
$\mu_h=\lambda_h^{n+1}$ in~\eqref{eq:BDF25} and $\Y_h= \frac{3\X_h^{n+1} -
4\X_h^{n}+\X_h^{n-1}}{2 \dt}$ in~\eqref{eq:BDF24}, and use~\eqref{eq:BDF23}
to obtain:
\begin{equation*}
\begin{aligned}
\c\left(\lambda_h^{n+1}, \u_h^{n+1}(\X_h^{n+1})\right)
&=\c\left(\lambda_h^{n+1}, \frac{3\X_h^{n+1} - 4\X_h^{n}+\X_h^{n-1}}{2
\dt}\right)  \\
&=\delta \rho \left( \frac{3\dX_h^{n+1} - 4\dX_h^n +
\dX_h^{n-1}}{\dt ^2}, \dX_h^{n+1}\right)_\B
\\
&\quad\quad+ \left( \P(\F_h^{n+1}), \nabla_s \frac{3\X_h^{n+1} -
4\X_h^{n}+\X_h^{n-1}}{2 \dt}\right)_\B.\\
\end{aligned}
\end{equation*}
Using the formula~\eqref{eq:formula} we have
\[
\aligned
&\left( \frac{3\dX_h^{n+1} - 4\dX_h^n +
\dX_h^{n-1}}{\dt ^2},\dX_h^{n+1}\right)_\B
=\frac{1}{4\dt^2}\left(\|\dX_h^{n+1}\|_\B^2+\|2\dX_h^{n+1}-\dX_h^n\|^2_\B\right.\\&\quad\left.
-\|\dX_h^n\|^2_\B-\|2\dX_h^n-\dX_h^{n-1}\|^2_\B
+\|\dX_h^{n+1}-2\dX_h^n+\dX_h^{n-1}\|^2_\B\right).
\endaligned
\]
Similarly we bound the term involving the Piola--Kirchhoff
stress-tensor $\P$ using again~\eqref{eq:formula}:
\begin{equation*}
\begin{aligned}
&\left( \kappa \F_h^{n+1}, \frac{3\F_h^{n+1} 
- 4\F_h^{n}+\F_h^{n-1}}{2 \dt}\right)_\B = \frac{1}{4\dt}\left(\|\F_h^{n+1}\|_\B^2+\|2\F_h^{n+1}-\F_h^n\|^2_\B\right.\\&\quad\left.
-\|\F_h^n\|^2_\B-\|2\F_h^n-\F_h^{n-1}\|^2_\B
+\|\F_h^{n+1}-2\F_h^n+\F_h^{n-1}\|^2_\B\right).
\end{aligned}
\end{equation*}
Putting all these relations together yields the result and concludes our proof.
\end{proof}
Summing up $n=2,\dots,m-1\le N$ in~\eqref{eq:BDF2stab} and dropping out positive terms, yields
the following unconditional stability bound:
\begin{equation}
\label{eq:matrix}
\begin{aligned}
&\rho_f\left( \|\u_h^{m}\|_\Omega^2 +\| 2\u_h^{m} - \u_h^{m-1}\|_\Omega^2	
\right)
%+ \sum_{i=1}^{N-1}\left\|\u_h^{i+1}-2\u_h^i+\u_h^{i-1}\right\|_\Omega^2\right)
%\\	
+ 4\nu\dt \sum_{n=1}^{m} \|\Grads \u_h^{n} \|_\Omega^2\\
&\qquad+\frac{\dr}{\dt}\left(\|\dX_h^{m}\|^2_\B+\|2\dX_h^{m}-\dX_h^{m-1}\|^2_\B\right)
+\kappa\left(\|\F_h^{m}\|^2_\B+\|2\F_h^{m}-\F_h^{m-1}\|^2_\B\right)\\
&\quad\leq
\rho_f\left(\|\u_h^1\|_\Omega^2+\|2\u_h^1-\u_h^{0}\|_\Omega^2\right) 
+ \frac{\dr}{\dt}\left(\|\dX_h^1\|_\Omega^2+\|2\dX_h^1-\dX_h^{0}\|_\Omega^2\right)\\
&\qquad +\kappa (\|\F_h^1\|_\Omega^2+\|2\F_h^1-\F_h^{0}\|_\Omega^2).
\end{aligned}
\end{equation}
We note that the scheme presented in~\eqref{eq:BDF21}-\eqref{eq:BDF26} is a fully implicit
system which involves the solution of a nonlinear equation, stemming from
the convection part in the Navier-Stokes equation as well as from the kinematic
coupling term. The problem can be presented in the following matrix form:
\begin{equation*}
	\begin{pmatrix}
		A(u_h^{n+1})   & -B^T & 0    & 0   				 & L_f(\X_h^{n+1})^T \\
		-B 		       & 0    & 0    & 0				 & 0			    \\
        0              & 0    & M_s  & -\frac3{2\dt}M_s  & 0	\\         
		0 		       & 0    & \frac{3\dr}{2\dt}M_s & A_s   & -L_s^T        \\
		L_f(\X_h^{n+1}) & 0   & 0  & -\frac{3}{2 \dt}L_s & 0		     	\\
	\end{pmatrix}	
	\begin{pmatrix}
		\u_h^{n+1} \\
		p_h^{n+1} \\
        \dX_h^{n+1}\\
		\X_h^{n+1} \\
		\lambda_h^{n+1} \\
	\end{pmatrix}
	= 
	\begin{pmatrix}
		g_1 \\
		0 \\
		g_2 \\
		g_3 \\
        g_4
	\end{pmatrix},
\end{equation*}
with
\begin{equation*}
	\begin{aligned}
		& A(\u_h^{n+1}) = \frac{3 \rho_f}{2\dt} M_f + K_f(\u_h^{n+1})\\
		& (M_f)_{ij} = \left( \phi_j,  \phi_i \right)_\Omega,
\quad(K_f(\u_h^{n+1}))_{ij} = a\left(\phi_j, \phi_i\right) + b\left(\u_h^{n+1}, \phi_j, \phi_i\right) \\
		& B_{ki} = \left( \divergence \phi_i, \psi_k\right)_\Omega\\
		& A_s = \frac{\delta \rho}{\dt^2} M_s + K_s,
\quad (M_s)_{ij} = \left( \chi_j, \chi_i\right)_\B,
\quad (K_s)_{ij} = \kappa \left( \nabla_s \chi_j, \nabla_s \chi_i \right)_\B\\
		& (L_f(\X_h^{n+1}))_{lj} = c\left(\zeta_l, \phi_j(\X_h^{n+1})\right), 
\quad (L_s)_{lj} = c\left(\zeta_l, \chi_j\right)\\
		& g_1 = \frac{2\rho_f}{\dt}M_f\u_h^n-\frac{\rho_f}{2\dt}M_f\u_h^{n-1},
        \quad g_2 = \frac1{\dt}M_s(4\dX_h^n-\dX_h^{n-1}\\
		& g_3 = \frac{\delta \rho}{\dt^2} M_s (2\X_h^n - \X_h^{n-1}), 
		\quad g_4 = -\frac{2}{\dt} L_s \X_h^n + \frac{1}{2\dt}L_s \X_h^{n-1}.
	\end{aligned}
\end{equation*}
Here $\phi_i$, $\psi_k$, $\chi_i$, and $\zeta_l$ denote the basis functions in
$\V_h$, $Q_h$, $\S_h$, and $\LL_h$, respectively.

Thanks to the theory developed in~\cite{BG17}, we know that the linearization
of the system above is associated to a steady saddle point problem which admits
a unique solution. Moreover, the finite element discretization is stable, thus
giving optimal convergence rates depending on the regularity of the solution.

We can either solve this system by a solver for nonlinear systems of equations
like a fixed point iteration or Newton like methods, or make this semi-implicit
by replacing the implicit terms with explicit ones. 
In the numerical experiments reported in the last section, we used a fixed point
iteration or the well known extrapolation formula $x^{n+1} = 2x^n - x^{n-1} + \mathcal{O}(\dt^2)$, in order to
replace $A(\u_h^{n+1})$ by $A(2\u_h^n - \u_h^{n-1})$ and $L_f(\X_h^{n+1})$ by
$L_f(2\X_h^n - \X_h^{n-1})$.

%First we note that the new iterate $v^{n+1}$ fulfills the fixed-point equation $v^{n+1} = M(v^{n+1})^{-1} f$. So we introduce a sequence of solutions $v_k^{n+1}$ by setting $v_0^{n+1} = v^n$. We then subsequently compute $v_{k+1}^{n+1}$ by solving $M(v_k^{n+1}) v_{k+1}^{n+1} = f$. After each step we compute the residual $r_k = \|M(v_k^{n+1})v_k^{n+1} - f\|$ for some vector norm $\| \cdot \|$. If the residual fulfills $r_k < \epsilon$ for some fixed $\epsilon$ we set $v^{n+1} = v_k^{n+1}$.

%The matrix $M(v)$ is regular, so all linear systems of equations can be solved. A justification of that fact can be given based on the theory of saddle point problems.

%In the sense of computation time the semi-implicit scheme clearly is superior as we only have to assemble and solve one linear system, whereas the fixed-point iterations might need a lot of steps to converge. Both variants will be tested later.

\subsection{Crank--Nicolson scheme}
The Crank--Nicolson scheme is another second order method widely used for the discretization of evolutionary equations thanks to the fact that it is A-stable and one-step.
In the literature, we can find two different schemes referred to as
Crank--Nicolson scheme, which can be obtained by applying either the midpoint (\CNM) or
the trapezoidal (\CNT) rule to integrate the ODE from $t_n$ to $t_{n+1}$.
Notice that the two methods coincide when the problem is linear.
As far as the Navier--Stokes equations are concerned,
in~\cite{HR90} the \CNM approach is used and the stability properties are analyzed, while
the \CNT formula has been introduced, for example, in~\cite[Chapter 7]{J16}. We are going to apply both formulations in order to compare their numerical performances.

Let us start with the \CNM version of the method.
\begin{problem}
\label{pb:FSIdiscCNM}
Given $\u_{0h} \in\V_h$ and $\X_{0h}\in\S_h$, for all $n=1,\dots,N$ find
$\u_h^n \in\V_h$, $p_h^n \in Q_h$, $\X_h^n \in \S_h$, $\dX_h^n\in\S_h$, and
$\lambda_h^n \in \LL_h$ such that
\begin{subequations}
%\label{FSIdiscCNM}
\begin{alignat}{2}
&\rho_f \left( \frac{\u_h^{n+1} - \u_h^{n}}{\dt},\v_h\right)_\Omega 
+ b\left(\frac{\u^{n+1}_h+\u^n_h}{2},\frac{\u^{n+1}_h+\u^n_h}{2},\v_h\right)\notag \\
&\quad+ a\left(\frac{\u^{n+1}_h + \u^n_h}{2}, \v_h\right) 
		- \left( \divergence \v_h, p^{n+1} \right)_\Omega \notag\\
&\quad+ \c\left(\lambda_{n+1}, \v_h\left(\frac{\X^{n+1}_h
+ \X^n_h}{2}\right)\right)=0 && \forall \v_h \in \V_h 
\label{eq:CNM1}\\
&\left( \divergence \u_h^{n+1}, q_h \right)_\Omega = 0  &&\forall q_h \in Q_h 
\label{eq:CNM2}\\
&\left(\frac{\dX_h^{n+1}+\dX_h^n}2,\w_h\right)_\B=
\left(\frac{\X_h^{n+1}-\X_h^n}{\dt},\w_h\right)_\B &&\forall\w_h\in\S_h
\label{eq:CNM3}\\
&\delta\rho\left(\frac{\dX_h^{n+1}-\dX_h^n}{\dt},\Y_h\right)_\B 
+\left( \P(\F_h^{n+1}),\nabla_s \Y_h\right)_\B  - \c\left(\lambda_h^{n+1},
\Y_h\right) = 0\quad&&\forall \Y_h \in S_h
\label{eq:CNM4}\\
&\c\left(\mu_h, \frac{\u_h^{n+1} + \u_h^n}{2}\left(\frac{\X_h^{n+1} +
\X^n_h}{2}\right)  - \frac{\X_h^{n+1} - \X_h^n}{\dt}\right)  = 0&&
\forall \mu_h \in \Lambda_h \label{eq:CNM5}\\
&		\u_h^0 = \u_0 \quad \X_h^0 = \X_0.\label{eq:CNM6}
\end{alignat}
\end{subequations}
\end{problem}
Equation~\eqref{eq:CNM5} is obtained by applying the midpoint rule to the kinematic constraint. Hence we approximate
the value $\u\left(\X\left(t+\frac{\dt}2\right),t+\frac{\dt}2\right)$ and we average both
$\u$ and $\X$ providing an approximation which is second order accurate. 
The following proposition states a stability estimate similar to the one for the \BDF method.
\begin{proposition}
\label{pr:CNM}
Assume $\delta \rho \geq 0$ and that the material behavior is governed
by an energy density $W$ which is $\mathcal{C}^1$ and convex. Let $\u_h^n\in\V_h$ and
$\X_h^n\in\S_h$, for $n = 1, \dots N$, be the solutions of Problem~\ref{pb:FSIdiscCNM}. Then the following estimate holds true:
	\begin{equation*}
		\begin{aligned}
			&\frac{\rho_f}{2\dt} \left(\| \u_h^{n+1}\|_\Omega^2 - \|\u_h^{n}\|_\Omega^2\right)
			+\frac{\nu}{4} \| \Grads \u_h^{n+1} + \Grads \u_h^n \|_\Omega^2  \\
	&\qquad+\frac{\dr}{\dt^2}\left(\|\dX_h^{n+1}\|^2_\B-\|\dX_h^n\|^2_B\right)
	+\frac{E(\X^{n+1}_h) - E(\X^n_h)}{\dt} \leq 0.\\
		\end{aligned}		
	\end{equation*}
\end{proposition}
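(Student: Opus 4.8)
The plan is to follow the testing strategy used for Proposition~\ref{pr:BDF2stab}, adapted to the one-step midpoint structure of Problem~\ref{pb:FSIdiscCNM} and to a general convex density $W$ in place of the linear law $\P(\F)=\kappa\F$. The guiding principle is that the midpoint evaluations built into the scheme force the two Lagrange-multiplier contributions to cancel exactly, after which convexity of $W$ converts the elastic work into the energy increment $E(\X_h^{n+1})-E(\X_h^n)$. First I would test the momentum equation~\eqref{eq:CNM1} with $\v_h=\tfrac{\u_h^{n+1}+\u_h^n}{2}$. Since the second and third slots of $b$ then coincide, the convective term vanishes by skew-symmetry; the viscous term becomes $a(\tfrac{\u_h^{n+1}+\u_h^n}{2},\tfrac{\u_h^{n+1}+\u_h^n}{2})=\tfrac\nu4\|\Grads\u_h^{n+1}+\Grads\u_h^n\|_\Omega^2$; and the difference quotient yields $\tfrac{\rho_f}{2\dt}(\|\u_h^{n+1}\|_\Omega^2-\|\u_h^n\|_\Omega^2)$ via the identity $(\u_h^{n+1}-\u_h^n,\u_h^{n+1}+\u_h^n)_\Omega=\|\u_h^{n+1}\|_\Omega^2-\|\u_h^n\|_\Omega^2$.

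For the pressure term I would test~\eqref{eq:CNM2} with $q_h=p_h^{n+1}$; because the discrete incompressibility constraint is imposed at every time level, $(\divergence\u_h^n,p_h^{n+1})_\Omega=0$ holds as well, so the full term $(\divergence\tfrac{\u_h^{n+1}+\u_h^n}{2},p_h^{n+1})_\Omega$ disappears. The coupling term that remains is $\c(\lambda_h^{n+1},\tfrac{\u_h^{n+1}+\u_h^n}{2}(\tfrac{\X_h^{n+1}+\X_h^n}{2}))$, whose argument is exactly the first term of the kinematic constraint~\eqref{eq:CNM5}; choosing $\mu_h=\lambda_h^{n+1}$ there rewrites it as $\c(\lambda_h^{n+1},\tfrac{\X_h^{n+1}-\X_h^n}{\dt})$. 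I would then test the solid equation~\eqref{eq:CNM4} with $\Y_h=\tfrac{\X_h^{n+1}-\X_h^n}{\dt}$, which by~\eqref{eq:CNM3} agrees in $\S_h$ with the midpoint velocity $\tfrac{\dX_h^{n+1}+\dX_h^n}{2}$. The inertial term then telescopes to $\tfrac{\dr}{2\dt}(\|\dX_h^{n+1}\|_\B^2-\|\dX_h^n\|_\B^2)$, the two multiplier terms cancel, and what is left is the elastic work $(\P(\F_h^{n+1}),\grads\tfrac{\X_h^{n+1}-\X_h^n}{\dt})_\B$.

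The single non-identity step, which is the actual dissipation mechanism, is to bound this elastic work from below by the energy difference. Using $\grads(\X_h^{n+1}-\X_h^n)=\F_h^{n+1}-\F_h^n$ together with $\P=\partial W/\partial\F$, convexity of $W$ gives the pointwise first-order inequality $\P(\F_h^{n+1}):(\F_h^{n+1}-\F_h^n)\ge W(\F_h^{n+1})-W(\F_h^n)$ on $\B$, and integration yields $(\P(\F_h^{n+1}),\grads\tfrac{\X_h^{n+1}-\X_h^n}{\dt})_\B\ge\tfrac{1}{\dt}(E(\X_h^{n+1})-E(\X_h^n))$. Assembling the four contributions and invoking this inequality produces the asserted estimate. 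I expect the only delicate points to be the bookkeeping that makes the two $\c(\lambda_h^{n+1},\cdot)$ terms cancel --- which hinges on the common midpoint evaluation $\tfrac{\X_h^{n+1}+\X_h^n}{2}$ shared by~\eqref{eq:CNM1} and~\eqref{eq:CNM5} --- and the justification of the pointwise convexity inequality under the stated $C^1$ hypothesis; the remaining manipulations are routine. My computation gives the coefficient $\tfrac{\dr}{2\dt}$ on the solid kinetic term, which suggests that the $\dt^2$ in the statement should read $2\dt$.
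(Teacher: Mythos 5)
Your proof is correct and follows essentially the same route as the paper's: test the momentum equation with the velocity average so that the convective and pressure terms drop, convert the multiplier term through the kinematic constraint~\eqref{eq:CNM5} and the solid equations~\eqref{eq:CNM3}--\eqref{eq:CNM4}, and use convexity of $W$ to bound the elastic work from below by the energy increment. Your remark about the coefficient is also right: since~\eqref{eq:CNM3} identifies $(\X_h^{n+1}-\X_h^n)/\dt$ with $(\dX_h^{n+1}+\dX_h^n)/2$ in $\S_h$ (whereas the paper's intermediate step substitutes $(\dX_h^{n+1}+\dX_h^n)/\dt$), the inertial contribution is $\frac{\dr}{2\dt}\bigl(\|\dX_h^{n+1}\|_\B^2-\|\dX_h^n\|_\B^2\bigr)$, so the factor $\dr/\dt^2$ in the stated estimate should read $\dr/(2\dt)$; this correction does not alter the unconditional stability bound obtained after summing over $n$.
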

\begin{proof} We mimic again the proof of~\cite[Prop.~3]{BCG15}.
	We take $\v_h=\left(\u_h^{n+1} + \u_h^n\right)/2$ in~\eqref{eq:CNM1} to get:
	\begin{equation*}
		\begin{aligned}
			&\rho_f \left( \frac{\u_h^{n+1} - \u_h^{n}}{\dt},\frac{\u_h^{n+1} + \u_h^{n}}{2}\right)_\Omega \\
			&\qquad+ b\left(\frac{\u^{n+1}_h + \u^n_h}{2}, \frac{\u^{n+1}_h + \u^n_h}{2}, \frac{\u^{n+1}_h + \u^n_h}{2}\right) + a\left(\frac{\u^{n+1}_h + \u^n_h}{2}, \frac{\u^{n+1}_h + \u^n_h}{2}\right) \\
			&\qquad- \left( \divergence \frac{\u^{n+1}_h + \u^n_h}{2}, p_h^{n+1}\right)_\Omega 
			+ \c\left(\lambda_h^{n+1}, \frac{\u^{n+1}_h + \u^n_h}{2}\left(\frac{\X^{n+1}_h + \X^n_h}{2}\right) \right)  = 0. \\
		\end{aligned}
	\end{equation*}
	The trilinear form $b$ vanishes since the second two arguments coincide. 
	Equation~\eqref{eq:CNM2} implies that the terms including the divergence vanish. If the initial condition $\u^0_h$ is discretely divergence free, the theorem even hold for $n=0$.
	Hence the above equality reduces to
	\begin{equation*}
		\begin{aligned}
&			\frac{\rho_f}{2\dt} \left(  \|\u_h^{n+1}\|_\Omega^2 - \|\u_h^n\|_\Omega^2 \right)
			+ \frac{\nu}{4} \| \nabla_{sym}\u_h^{n+1} + \nabla_{sym}\u_h^{n}\|_\Omega^2 &\\ 
&\qquad	+\c\left(\lambda_h^{n+1}, \frac{\u^{n+1}_h + \u^n_h}{2}\left(\frac{\X^{n+1}_h + \X^n_h}{2}\right) \right) = 0. \\
		\end{aligned}
	\end{equation*}
We set $\mu_h=\lambda^{n+1}$ in~\eqref{eq:CNM5}, then, using the equations in the solid~\eqref{eq:CNM3} with $\Y_h=(\X_h^{n+1}-\X_h^n)/2$ and~\eqref{eq:CNM4} with $\w_h=(\dX_h^{n+1}-\dX_h^n)/2$, we arrive at 
\begin{equation*}
\begin{aligned}
\c\Bigg(\lambda_h^{n+1},& \frac{\u^{n+1}_h + \u^n_h}{2}\left(\frac{\X^{n+1}_h + \X^n_h}{2}\right) \Bigg) =\c\left(\lambda_h^{n+1}, \frac{\X_h^{n+1} - \X_h^{n}}{\dt}\right) \\
&= \delta \rho \left( \frac{\dX_h^{n+1}-\dX_h^n}{\dt},\frac{\X_h^{n+1}-\X_h^n}{\dt} \right)_\B +
			\left ( \P(\F_h^{n+1}), \nabla_s \frac{\X_h^{n+1}-\X_h^n}{\dt} \right )_\B  \\	
&=	{\dr}\left( \frac{\dX_h^{n+1}+\dX_h^n}{\dt},\frac{\dX_h^{n+1}-\dX_h^n}{\dt} \right)_\B
	+\left ( \P(\F_h^{n+1}), \frac{\F_h^{n+1}-\F_h^n}{\dt} \right )_\B   \\
&=\frac{\dr}{\dt^2}\left(\|\dX_h^{n+1}\|^2_\B-\|\dX_h^n\|^2_B\right)
+\left ( \P(\F_h^{n+1}), \frac{\F_h^{n+1}-\F_h^n}{\dt} \right )_\B.
\end{aligned}
	\end{equation*}
	Now we want to see how the last term relates to the energy~\eqref{eq:potenergy}. Let us define the function $\mathcal{W}:[0,1] \rightarrow \mathbb{R}$ by
	\begin{equation*}
		\mathcal{W}(t) = W(\F^n_h + t (\F^{n+1}_h - \F^n_h)).
	\end{equation*}
	The convexity of $\mathcal{W}$ is inherited from $W$, so we see $\mathcal{W}^\prime(1) \geq \mathcal{W}(1) - \mathcal{W}(0)$. Using the chain rule, we obtain $\mathcal{W}^\prime(1) = \P(\F^{n+1}_h): (\F^{n+1}_h - \F^n_h)$. This is exactly the integrand in the term we want estimate. We obtain
	\begin{equation*}
		\begin{aligned}
			\left ( \P(\F_h^{n+1}), \frac{\F_h^{n+1}-\F_h^n}{\dt} \right )_\B  &= 
			\frac{1}{\dt}\int_\B \mathcal{W}^\prime(1) ds \geq \\
			\qquad\frac{1}{\dt}\int_\B \mathcal{W}(1) - \mathcal{W}(0) ds &= 
			\frac{E(\X^{n+1}_h) - E(\X^n_h)}{\dt}.
		\end{aligned}
	\end{equation*}
	Putting all equations together yields the result.
\end{proof}
Again we can sum the equations with respect to $n$ from $0$ to $m-1\le N$ and get the unconditional stability bound:
\begin{equation*}
	\begin{aligned}
&		\frac{\rho_f}{2\dt} \|\u_h^m\|_\Omega^2
		+ \frac{\nu}{4} \sum_{n=1}^{m} \|\Grads (\u_h^{n} + \u_h^{n-1}) \|_\Omega^2
		+ \frac{\delta \rho}{\dt^2} \|\dX_h^{m}\|^2_\B
		+\frac{E(\X^N_h)}{\dt} \\
&\qquad\le\frac{\rho_f}{2\dt} \|\u_h^0\|_\Omega^2 + \frac{\delta \rho}{\dt^2} \|\dX_h^0\|^2_\B
		+\frac{E(\X_h^0)}{\dt}.\\
	\end{aligned}
\end{equation*}
If we use the \CNT method, we are led to consider the problem:
\begin{problem}
\label{pb:FSIdiscCNT}
Given $\u_{0h}\in\V_h$ and $\X_0 \in\S_h$, for all $n = 1, \dots, N$ find
$\u_h^n \in\V_h$, $p_h^n \in Q_h$, $\X_h^n \in S_h$, $\dX_h^n\in\S_h$ and
$\lambda_h^n \in \LL_h$, such that:
\begin{equation}
\label{FSIdiscCNT}
\begin{aligned}
&\rho_f \left( \frac{\u_h^{n+1} - \u_h^{n}}{\dt},\v_h\right)_\Omega 
+ \frac 1 2 b\left(\u^{n+1}_h, \u^{n+1}_h, \v_h\right) + \frac 12 b\left(\u_h^n, \u_h^n, \v\right) \\
&\quad
+ \frac{1}{2}a\left(\u^{n+1}_h+\u^{n}_h,\v_h\right) 
- \frac12\left( \divergence \v_h, p^{n+1}+p_h^n \right)_\Omega \\
&\quad
+ \frac 1 2\c\left(\lambda_h^{n+1}, \v_h\left(\X^{n+1}_h\right)\right)
+ \frac 1 2\c\left(\lambda_h^{n}, \v_h\left(\X^{n}_h\right)\right) = 0 
&& \forall \v_h \in\V_h \\
&\left( \divergence \u_h^{n+1}, q_h \right)_\Omega = 0  
&& \forall q \in Q_h \\
&\left(\frac{\dX_h^{n+1}+\dX_h^n}2,\w_h\right)_\B=
\left(\frac{\X_h^{n+1}-\X_h^n}{\dt},\w_h\right)_\B
&&\forall\w_h\in\S_h\\
&\dr\left(\frac{\dX_h^{n+1}-\dX_h^n}{\dt},\Y_h\right)+
\left(\frac{\P(\F_h^{n+1})+\P(\F_h^n)}2, \grads \Y_h\right)_\B\\  
&\quad- \c\left(\frac{\lambda_h^{n+1}+\lambda_h^n}{\dt}, \Y_h\right) = 0
&& \forall \Y_h \in S_h \\
&\c\left(\mu_h,\frac 1 2\u_h^{n+1}\left(\X_h^{n+1}\right) 
+ \frac 1 2\u_h^{n}\left(\X_h^{n}\right)-\frac{\X_h^{n+1} - \X_h^n}{\dt}\right)=0
&& \forall \mu_h \in \LL_h \\
&\u_h^0 = \u_{0h} \quad \X_h^0 = \X_{0h}. \\
\end{aligned}
\end{equation}
\end{problem}
We omit the stability analysis of this problem which is not straightforward not even for the Navier--Stokes equations. 
Nevertheless this scheme has been used in our numerical tests and instabilities seem not to occur.

Both Problems~\eqref{pb:FSIdiscCNM} and~\eqref{pb:FSIdiscCNT} can be presented in matrix form with the a structure similar
to that of \BDF, with some terms properly modified. The solution of the resulting nonlinear system can be obtained with a Newton like solver, Picard iterations, or by linearization. In our numerical experiments we adopted the latter two approaches.

\section{Numerical Experiments}
\label{sec:numExp}
In this section we present some numerical results, with the aim of verifying the accuracy of the higher-order schemes presented in the previous sections. We consider fluid and solid with the same density $\rho_f=\rho_s=\rho$ and same viscosity $\nu_f=\nu_s=\nu$.
In our numerical experiments we adopt either Picard iterations or the semi-implicit schemes obtained by linearization of the nonlinear terms with second order extrapolations. More precisely, we substitute the value of a certain quantity $y(t_{n+1})$ with $2y(t_n)-y(t_{n-1})$.
%Since this extrapolation technique requires the value at time $t_{n-1}$, we need to define how to compute it at the initial step. In the case of the two-step \BDF method the first step is performed with a one step method, while for the Crank--Nicolson scheme .....

The first test case is the deformed annulus considered in~\cite{BCG15} where convergence tests illustrate the first order rate for the implicit Euler method. The second test case is the floating disk for which we want to analyze the volume conservation properties of our methods.

In all our experiments, we use triangular meshes both in $\Omega$ and $\B$, enhanced Bercovier--Pironneau elements (i.e.
$P1$iso$P2$/$(P1 + P0)$) for the $(\u, p)$ discretization (see~\cite{BCGG12}) and $P1$ elements for the discretization of the
body's deformation $\X$, body's velocity $\dX$ and the distributed Lagrange multipliers $\lambda$. We evaluate the bilinear form
$\c$ by means of the $L^2$ scalar product.

At each time step or at each fixed point iteration we have to assemble the matrix in~\eqref{eq:matrix} and to solve the
resulting algebraic system. First of all, we observe that the matrix is neither symmetric nor positive definite.
The sparsity pattern of the matrix is shown in Fig.~\ref{Sparsity}.
\begin{figure}
	\centering
	\includegraphics[width=0.7\textwidth]{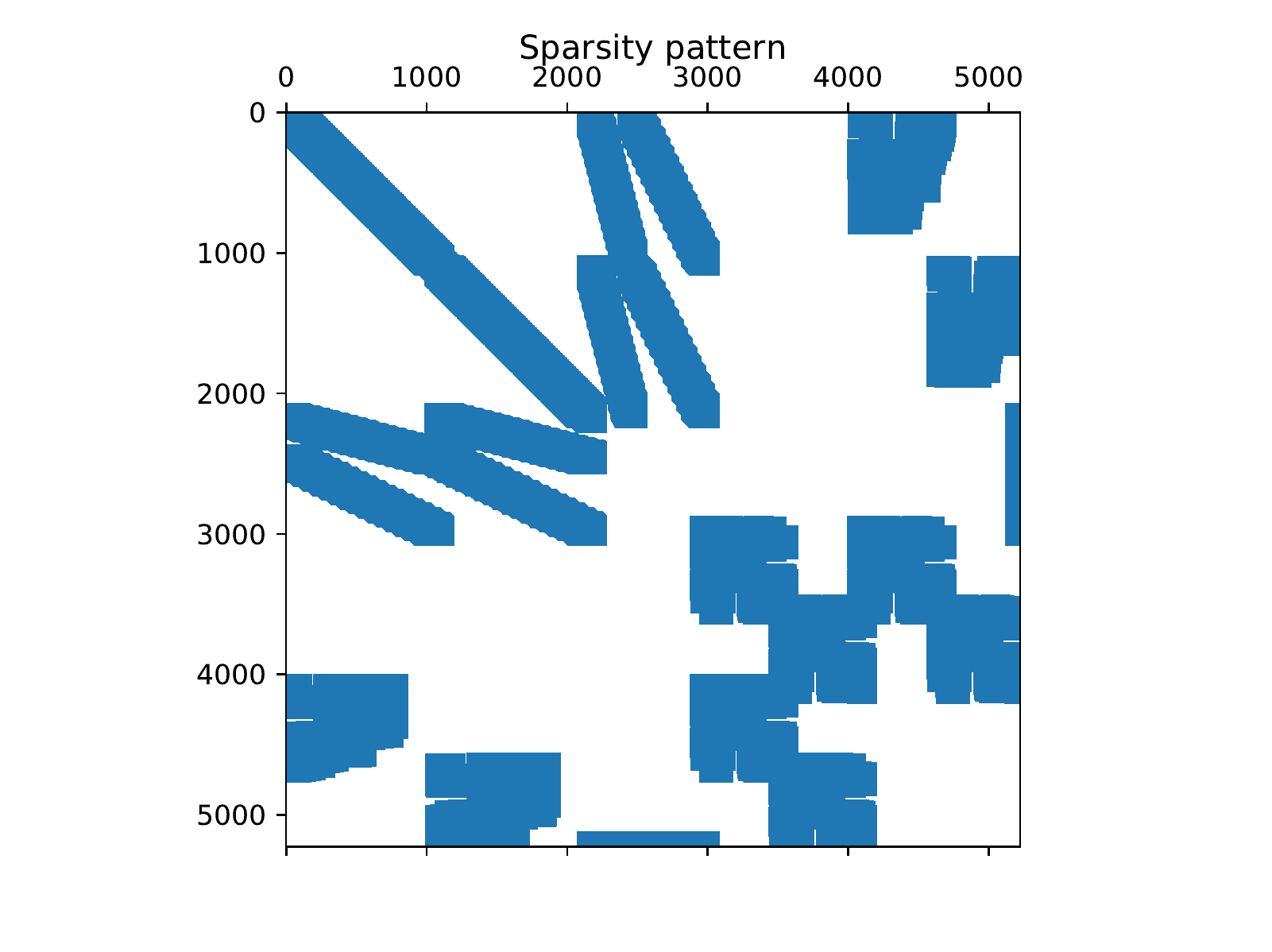}
	\caption{Sparsity pattern for a matrix arising from Equation
\eqref{eq:FSIdiscBDF1}}
	\label{Sparsity}
\end{figure} 
Another aspect that influences the computational time is the assembly of the coupling terms. 
In order to construct the contribution of the matrix $L_f(\X_h^n)$, we have to integrate on an element $T_s$ in the
Lagrangian mesh $\mathcal{T}_h^\B$ quantities living on the Eulerian mesh such as $\v(\X(\cdot, t),t)$. Numerically, this is done using some quadrature formula on $T_s$. Consequently, we have to find all intersecting elements $T_f \in \mathcal{T}_h$ of the fluid mesh such that $\X(T_s, t) \cap T_f \neq \emptyset$. Essentially this results in checking every deformed solid element against each fluid element. Bounding boxes are a good tool to detect when two cells surely do not intersect. This reduces the computational cost for each cell to cell comparison. Anyway, the computational cost for finding the intersecting cells grows linearly with the number of cells: 
\begin{equation*}
	\# \text{comparisons} = \# \text{fluid cells} \cdot \# \text{solid cells} \approx \frac{1}{h_f^d} \frac{1}{h_s^d}.
\end{equation*}
Although, the theoretical analysis allows to choose meshes for the fluid and the solid independently one from each other, 
it has been observed, for example in~\cite{HC12}, that the mesh parameter $h_s$ of the solid should be approximately half the size of the mesh parameter $h_f$ of the fluid. This can be explained as a trade-off between accuracy and computational effort. 

\subsection{The deformed annulus}
In this test, an annulus is placed at the center of a square filled with some fluid. Initially the annulus is deformed from its initial configuration and the fluid is at rest. The internal forces steer the annulus back into its undeformed configuration and set the surrounding fluid into motion. The material of the annulus is hyperelastic and described by the identity $\P(\F) = \kappa \F$. Thanks to the symmetry of the geometry,
we run the simulation in the upper right quarter of the domain. The fluid's domain is the unit square $\Omega = (0,1)^2$. On the upper and right boundary we impose no-slip boundary conditions $\u(\x,t) = 0$ for the fluid velocity.
The reference domain for the immersed structure is a section of the annulus: $\B = \{\x \in \mathbb{R}^2 : x_1, x_2 \geq 0, 0.3 \leq |\x| \leq 0.5 \}$. On the remaining part of the boundary, the fluid and the solid are allowed to move along the tangential direction, while the normal component is set to $0$.
The initial conditions are 
\begin{equation*}
	\u(\x,0) = 0,\quad\X(\s,0) = \begin{pmatrix} \frac{1}{1.4} s_1 \\ 1.4 s_2 \end{pmatrix}.
\end{equation*}
The meshes for the fluid and the structure are schematically reported in Fig.~\ref{AnnulusMeshes}. 
\begin{figure}
	\centering
	\includegraphics[height=0.4\textwidth]{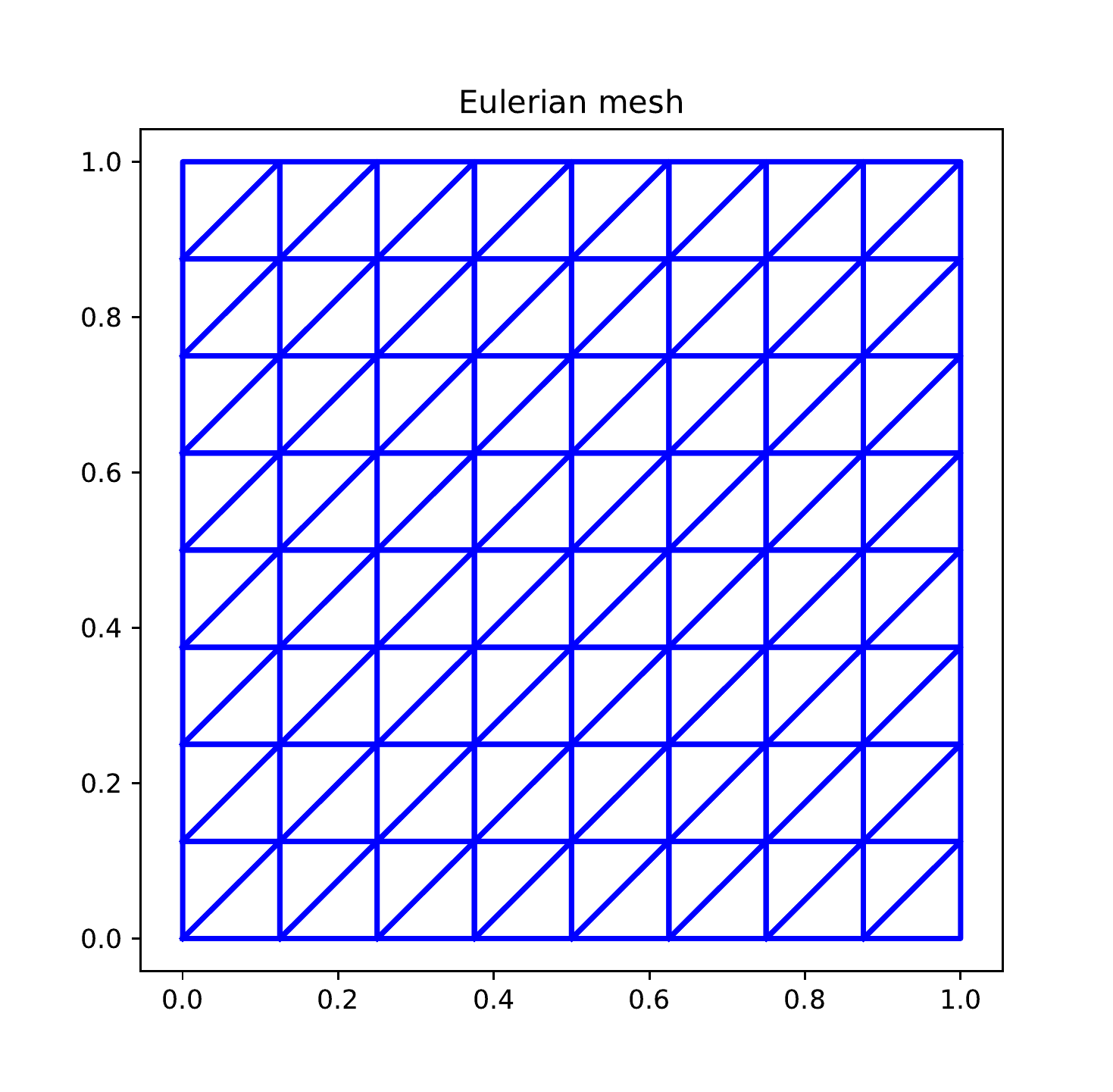}
	\includegraphics[height=0.4\textwidth]{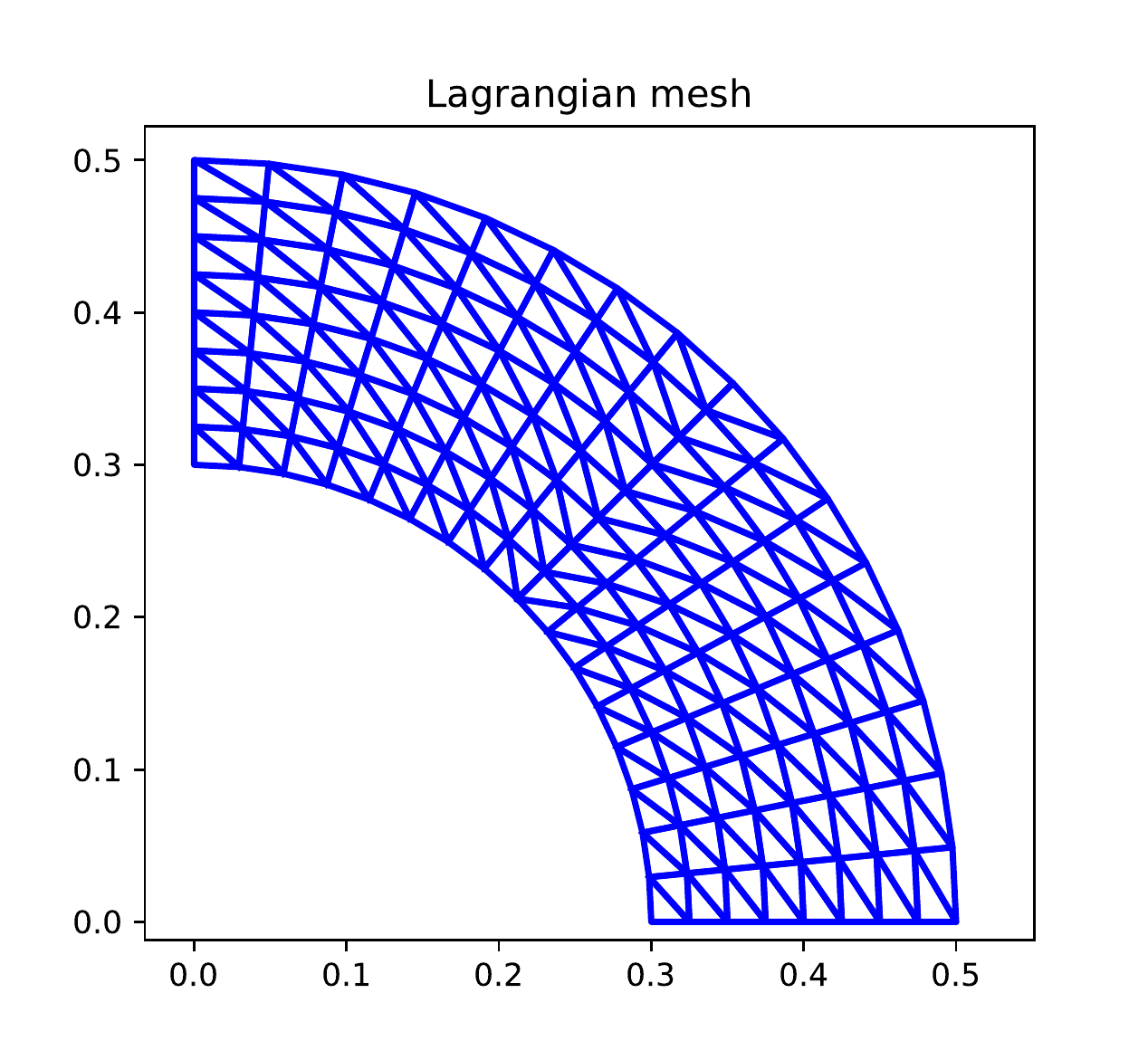}
	\caption{Meshes for the fluid and the structure}
	\label{AnnulusMeshes}
\end{figure} 
We use two meshes whose number of degrees of freedom can be found in Table \ref{TableMeshParams}. 
\begin{table}
	\centering
	\begin{tabular}{| c | c | c | c | c |}
		\hline
		 & DOFs $\u_h$ & DOFs $p_h$ & DOFs $\X_h$ & DOFs $\lambda_h$ \\
		\hline	
		coarse mesh (M = $8$)  &    $578$ &   $209$ &   $306$ &   $306$ \\
		fine mesh (M = $16$) &  $2,178$ &   $801$ & $1,122$ & $1,122$ \\
		\hline		
	\end{tabular}
	\caption{Mesh parameters}
	\label{TableMeshParams}
\end{table}
In Fig.~\ref{ExampleAnnulus} we report the position of the annulus and the streamlines of the velocity
corresponding to the following choice of parameters $\kappa = 10$, $\nu = 0.1$, $\rho = 1$, $T=1$. 
The \BDF method is used with $\dt = 0.05$. The snapshots are taken at $t = 0$, $t = 0.1$, $t = 0.5$, and $t=1$.
\begin{figure}
	\centering
	\includegraphics[width=0.4\textwidth]{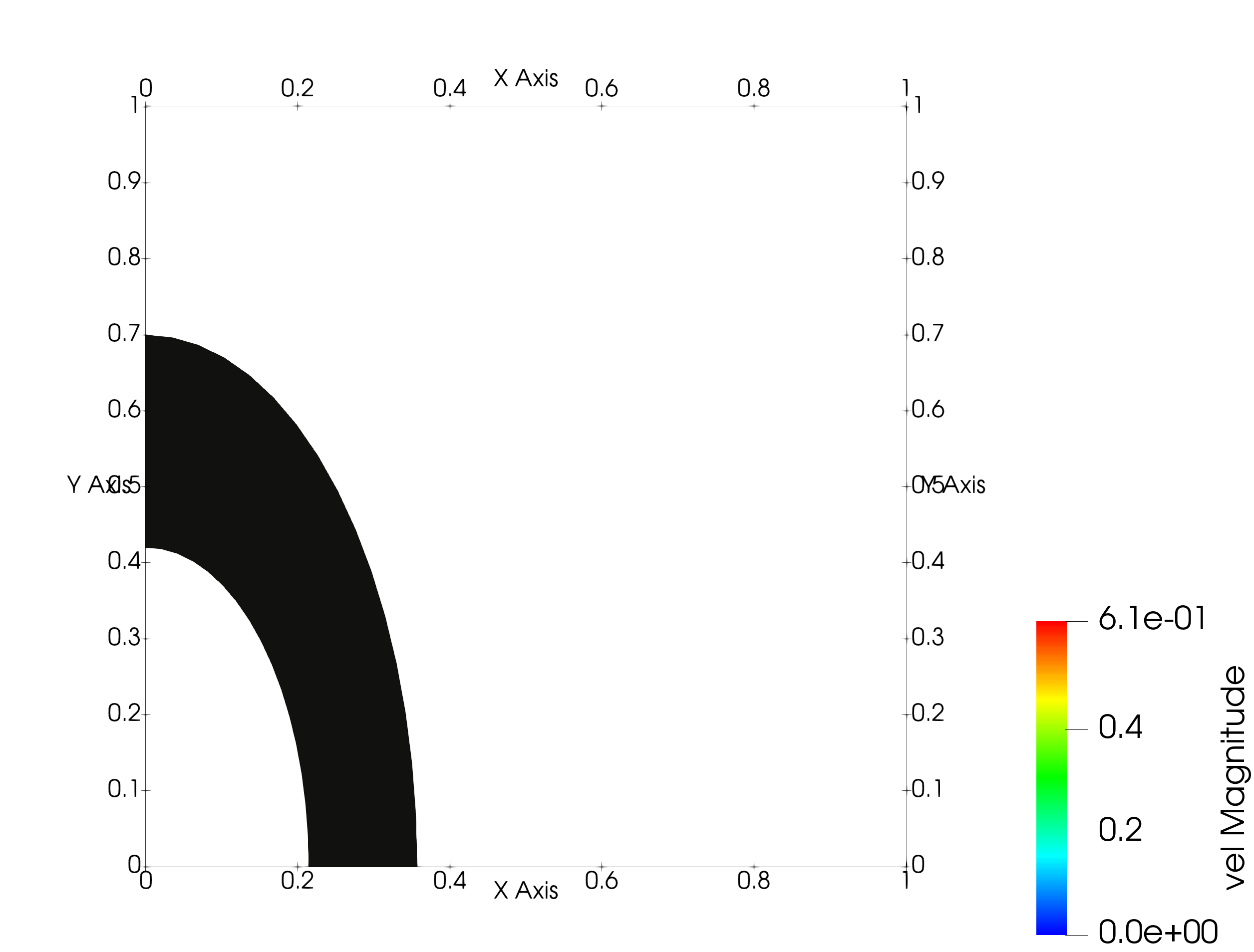}
	\includegraphics[width=0.4\textwidth]{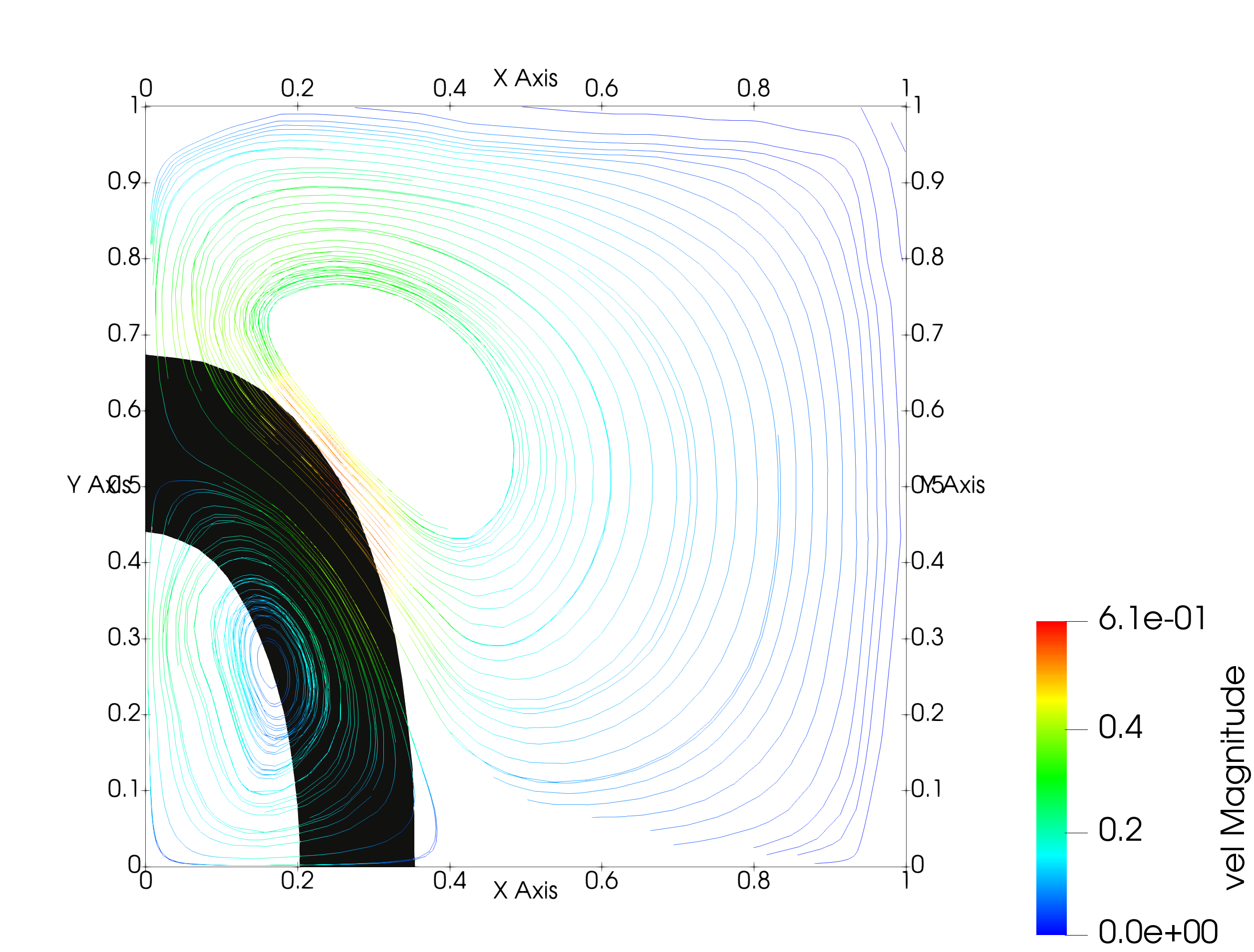}
	\includegraphics[width=0.4\textwidth]{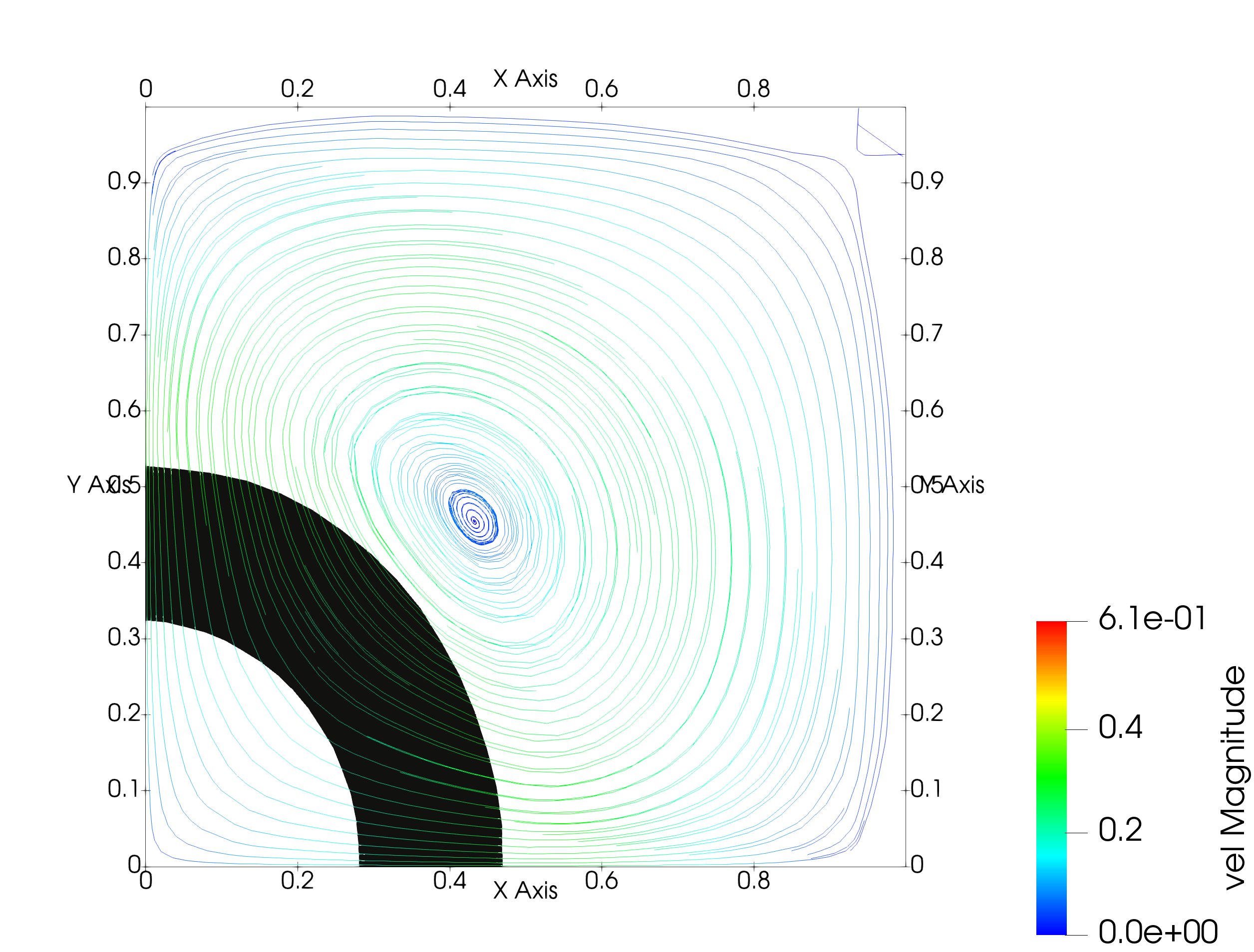}
	\includegraphics[width=0.4\textwidth]{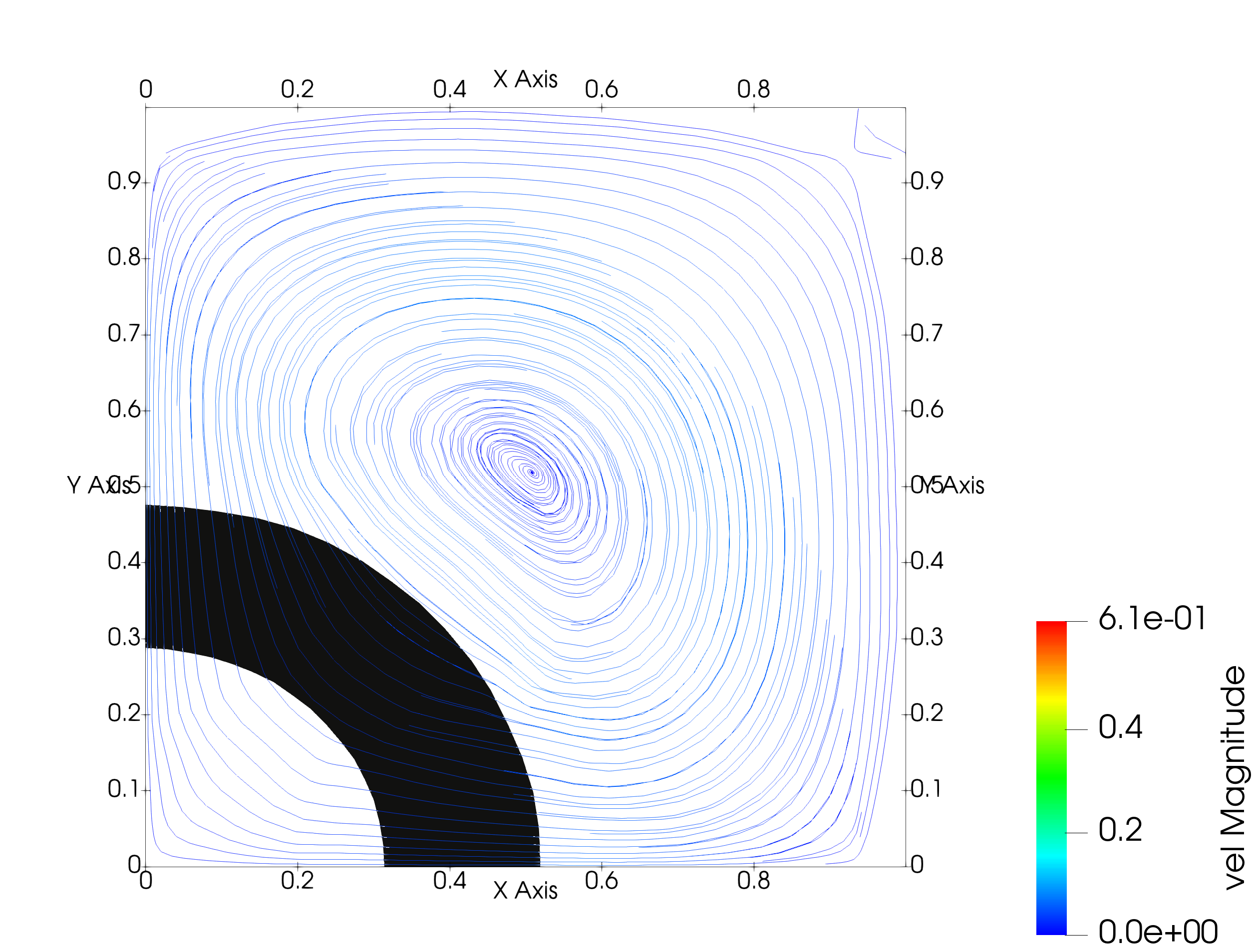}
	\caption[The deformed annulus]{The deformed annulus \footnotemark}
	\label{ExampleAnnulus}
\end{figure} 

In what follows, we report some tests to investigate the higher-order convergence of the \BDF method and the two version of the Crank--Nicolson scheme \CNM and \CNT and compare them with the backward Euler method \BD.
The convergence tests are done using the two meshes plotted in Fig.~\ref{AnnulusMeshes} and
parameters $\rho= 1$, $\nu = 1$, $\kappa = 10$, and $T=0.2$. Solutions have been calculated for $\dt = T/4,\ T/8,\ T/16,\ T/32$. Since in this case no analytic solution is available, for each mesh we computed a reference solution 
using the \BDF method with time-step $\dt = 0.001$ and we report the relative errors with respect to it.
When we use Picard iterations to obtain the solution of fully implicit schemes, we calculate the residual
in the $L^2$-norm and stop the iteration when it is less than the tolerance $\epsilon=10^{-6}$.

We first investigate the convergence of the fully implicit scheme. The relative errors in the $L^2$ norm are displayed in Tables \ref{TableConvNavierImpl8} and \ref{TableConvNavierImpl16}.

\begin{table}
	\centering
	\begin{tabular}{| l | l r | l r | l r | l r |}
		\multicolumn{9}{c}{Fluid velocity} \\
		\hline
		& \multicolumn{2}{|c|}{\BD} & \multicolumn{2}{|c|}{\BDF} & \multicolumn{2}{|c|}{\CNM} & \multicolumn{2}{|c|}{\CNT}\\
		$\dt$ & $L^2$ error & rate & $L^2$ error & rate & $L^2$ error & rate & $L^2$ error & rate\\
		\hline	
		$0.05$   &$7.63\cdot 10^{-2}$&      &$2.97\cdot 10^{-2}$&      &$2.37\cdot 10^{-1}$&      &$2.42\cdot 10^{-1}$& \\
		$0.025$  &$4.11\cdot 10^{-2}$&$0.89$&$4.90\cdot 10^{-3}$&$2.60$&$6.24\cdot 10^{-2}$&$1.92$&$6.02\cdot 10^{-2}$&$2.00$\\
		$0.0125$ &$2.13\cdot 10^{-2}$&$0.95$&$1.13\cdot 10^{-3}$&$2.11$&$1.21\cdot 10^{-2}$&$2.36$&$1.10\cdot 10^{-2}$&$2.45$\\
		$0.00625$&$1.08\cdot 10^{-2}$&$0.97$&$2.86\cdot 10^{-4}$&$1.98$&$2.03\cdot 10^{-3}$&$2.58$&$9.95\cdot 10^{-4}$&$3.47$\\
		\hline
	\end{tabular}

	\begin{tabular}{| l | l r | l r | l r | l r |}
		\multicolumn{9}{c}{Structure deformation} \\
		\hline
		& \multicolumn{2}{|c|}{\BD} & \multicolumn{2}{|c|}{\BDF} & \multicolumn{2}{|c|}{\CNM} & \multicolumn{2}{|c|}{\CNT}\\
		$\dt$ & $L^2$ error & rate & $L^2$ error & rate & $L^2$ error & rate & $L^2$ error & rate\\
		\hline
		$0.05$   &$1.60\cdot 10^{-3}$&      &$4.39\cdot 10^{-4}$&      &$1.43\cdot 10^{-3}$&      &$2.95\cdot 10^{-4}$& \\
		$0.025$  &$8.40\cdot 10^{-4}$&$0.93$&$9.75\cdot 10^{-5}$&$2.17$&$7.90\cdot 10^{-4}$&$0.86$&$6.89\cdot 10^{-5}$&$2.10$\\
		$0.0125$ &$4.29\cdot 10^{-4}$&$0.97$&$2.53\cdot 10^{-5}$&$1.95$&$4.06\cdot 10^{-4}$&$0.96$&$7.53\cdot 10^{-6}$&$3.19$\\
		$0.00625$&$2.17\cdot 10^{-4}$&$0.98$&$6.41\cdot 10^{-6}$&$1.98$&$2.04\cdot 10^{-4}$&$0.99$&$2.05\cdot 10^{-6}$&$1.88$\\
		\hline
	\end{tabular}
	\caption{Convergence results for the fully implicit scheme on the coarse mesh}
	%computed on 2019-01-27
	\label{TableConvNavierImpl8}
\end{table}

\begin{table}
	\centering
	\begin{tabular}{| l | l r | l r | l r | l r |}
		\multicolumn{9}{c}{Fluid velocity} \\
		\hline
		& \multicolumn{2}{|c|}{\BD} & \multicolumn{2}{|c|}{\BDF} & \multicolumn{2}{|c|}{\CNM} & \multicolumn{2}{|c|}{\CNT}\\
		$\dt$ & $L^2$ error & rate & $L^2$ error & rate & $L^2$ error & rate & $L^2$ error & rate\\
		\hline	
		$0.05$   &$9.05\cdot 10^{-2}$&      &$3.62\cdot 10^{-2}$&      &$2.28\cdot 10^{-1}$&      &$2.26\cdot 10^{-1}$& \\
		$0.025$  &$4.87\cdot 10^{-2}$&$0.89$&$5.05\cdot 10^{-3}$&$2.84$&$6.23\cdot 10^{-2}$&$1.87$&$6.04\cdot 10^{-2}$&$1.91$\\
		$0.0125$ &$2.54\cdot 10^{-2}$&$0.94$&$1.20\cdot 10^{-3}$&$2.07$&$2.28\cdot 10^{-2}$&$1.45$&$2.07\cdot 10^{-2}$&$1.54$\\
		$0.00625$&$1.29\cdot 10^{-2}$&$0.98$&$3.53\cdot 10^{-4}$&$1.77$&$5.27\cdot 10^{-3}$&$2.11$&$4.03\cdot 10^{-3}$&$2.36$\\
		\hline
	\end{tabular}

	\begin{tabular}{| l | l r | l r | l r | l r |}
		\multicolumn{9}{c}{Structure deformation} \\
		\hline
		& \multicolumn{2}{|c|}{\BD} & \multicolumn{2}{|c|}{\BDF} & \multicolumn{2}{|c|}{\CNM} & \multicolumn{2}{|c|}{\CNT}\\
		$\dt$ & $L^2$ error & rate & $L^2$ error & rate & $L^2$ error & rate & $L^2$ error & rate\\
		\hline
		$0.05$   &$1.98\cdot 10^{-3}$&      &$5.19\cdot 10^{-4}$&      &$1.65\cdot 10^{-3}$&      &$4.04\cdot 10^{-4}$& \\
		$0.025$  &$1.05\cdot 10^{-3}$&$0.92$&$9.79\cdot 10^{-5}$&$2.41$&$9.27\cdot 10^{-4}$&$0.84$&$8.48\cdot 10^{-5}$&$2.25$\\
		$0.0125$ &$5.31\cdot 10^{-4}$&$0.99$&$3.13\cdot 10^{-5}$&$1.64$&$4.90\cdot 10^{-4}$&$0.92$&$2.47\cdot 10^{-5}$&$1.78$\\
		$0.00625$&$2.70\cdot 10^{-4}$&$0.98$&$1.35\cdot 10^{-5}$&$1.22$&$2.50\cdot 10^{-4}$&$0.97$&$3.47\cdot 10^{-6}$&$2.83$\\
		\hline
	\end{tabular}
	\caption{Convergence results for the fully implicit scheme on the fine mesh}
	%computed on 2019-01-27
	\label{TableConvNavierImpl16}
\end{table}

The backward Euler method shows a clean first order rate of convergence, while the \BDF method gives a clean second order rate of convergence only on the coarse mesh and in the fine mesh for the fluid velocity. 
The results for the structure displacement are not very clear on the fine mesh. The second order rate of 
convergence is achieved only for larger value of the time step and deteriorates as the time step decreases.
On the other hand, we see that in this case the error is about $10^{-5}$, which is likely close to the accuracy that our fine mesh can provide. 
The \CNT method achieves second order convergence for fluid and structure on both meshes, while the \CNM scheme provides a second order convergence only for the fluid velocity. In general, the \CNT scheme seems to perform better from the point of view of the rate of convergence and of the size of the error.

In order to investigate the behavior of the nonlinear solver,
we track the maximum number of Picard iterations needed to reach the given tolerance. The results can be found in Tables \ref{TableNavierNonlinConv8} and \ref{TableNavierNonlinConv16}.
We observe that smaller time steps give smaller number of iterates. This can be motivated by the fact that we use the solution at the previous time step as the initial value for the iteration, therefore for smaller time step it is closer to the value at the current time.
\begin{table}
	\centering
	\begin{tabular}{| l | c | c | c | c |}
		\hline
		$\dt$ & \BD & \BDF & \CNM & \CNT \\
		\hline
		$0.05$	 &$5$&$5$&$4$&$7$\\	
		$0.025$	 &$4$&$4$&$3$&$4$\\	
		$0.0125$ &$3$&$3$&$3$&$3$\\	
		$0.00625$&$3$&$3$&$2$&$3$\\	
		\hline		
	\end{tabular}
	\caption{Maximum iterates of the nonlinear solver on the coarse mesh}
	%computed on 2019-01-27
	\label{TableNavierNonlinConv8}
\end{table}

\begin{table}
	\centering
	\begin{tabular}{| l | c | c | c | c |}
		\hline
		$\dt$ & \BD & \BDF & \CNM & \CNT \\
		\hline
		$0.05$	 &$10$&$5$&$6$&$6$\\
		$0.025$	 &$6$ &$5$&$5$&$4$\\
		$0.0125$ &$6$ &$4$&$4$&$4$\\
		$0.00625$&$4$ &$4$&$3$&$3$\\
		\hline
	\end{tabular}
	\caption{Maximum iterates of the nonlinear solver on the fine mesh}
	%computed on 2019-01-27
	\label{TableNavierNonlinConv16}
\end{table}
In the case of fluid structure interaction, not only the nonlinear convection term has to be assembled at each iteration, but also the one coupling fluid and structure. Therefore, the semi-implicit scheme is attractive as it requires the solution of only one big system at each time steps. Results for the convergence study of the semi-implicit versions of the methods can be found in Tables \ref{TableConvNavierSemiimpl8} and \ref{TableConvNavierSemiimpl16}.
\begin{table}
	\centering
	\begin{tabular}{| l | l r | l r | l r | l r |}
		\multicolumn{9}{c}{Fluid velocity} \\
		\hline
		& \multicolumn{2}{|c|}{\BD} & \multicolumn{2}{|c|}{\BDF} & \multicolumn{2}{|c|}{\CNM} & \multicolumn{2}{|c|}{\CNT}\\
		$\dt$ & $L^2$ error & rate & $L^2$ error & rate & $L^2$ error & rate & $L^2$ error & rate\\
		\hline	
		$0.05$   &$7.78\cdot 10^{-2}$&      &$3.05\cdot 10^{-2}$&      &$2.49\cdot 10^{-1}$&      &$2.58\cdot 10^{-1}$& \\
		$0.025$  &$4.17\cdot 10^{-2}$&$0.90$&$7.89\cdot 10^{-3}$&$1.95$&$6.24\cdot 10^{-2}$&$2.00$&$6.74\cdot 10^{-2}$&$1.94$\\
		$0.0125$ &$2.17\cdot 10^{-2}$&$0.95$&$3.14\cdot 10^{-3}$&$1.33$&$1.24\cdot 10^{-2}$&$2.33$&$2.64\cdot 10^{-2}$&$1.35$\\
		$0.00625$&$1.10\cdot 10^{-2}$&$0.97$&$1.29\cdot 10^{-3}$&$1.29$&$2.25\cdot 10^{-3}$&$2.47$&$3.18\cdot 10^{-3}$&$3.06$\\
		\hline
	\end{tabular}

	\begin{tabular}{| l | l r | l r | l r | l r |}
		\multicolumn{9}{c}{Structure deformation} \\
		\hline
		& \multicolumn{2}{|c|}{\BD} & \multicolumn{2}{|c|}{\BDF} & \multicolumn{2}{|c|}{\CNM} & \multicolumn{2}{|c|}{\CNT}\\
		$\dt$ & $L^2$ error & rate & $L^2$ error & rate & $L^2$ error & rate & $L^2$ error & rate\\
		\hline
		$0.05$   &$1.67\cdot 10^{-3}$&      &$6.79\cdot 10^{-4}$&      &$1.44\cdot 10^{-3}$&      &$3.52\cdot 10^{-4}$& \\
		$0.025$  &$8.65\cdot 10^{-4}$&$0.95$&$2.70\cdot 10^{-4}$&$1.33$&$7.91\cdot 10^{-4}$&$0.86$&$2.60\cdot 10^{-4}$&$0.44$\\
		$0.0125$ &$4.36\cdot 10^{-4}$&$0.99$&$1.24\cdot 10^{-4}$&$1.12$&$4.05\cdot 10^{-4}$&$0.97$&$1.53\cdot 10^{-5}$&$4.08$\\
		$0.00625$&$2.17\cdot 10^{-4}$&$1.01$&$5.71\cdot 10^{-5}$&$1.12$&$2.05\cdot 10^{-4}$&$0.98$&$9.43\cdot 10^{-6}$&$0.70$\\
		\hline
	\end{tabular}
	\caption{Convergence results for the semi-implicit scheme on the coarse mesh}
	%computed on 2019-01-27
	\label{TableConvNavierSemiimpl8}
\end{table}
We see a similar behavior as the one given by Picard iterations. The backward Euler method gives a good first order convergence. The \BDF method shows a higher convergence order, which slows down for smaller time-steps in the fluid domain, in particular it is almost first order for the structure deformation.  
The behavior of the \CNM is not very clear, in fact only the error in the fluid velocity on the coarse mesh achieves second order convergence. The \CNT method provides better results which seem to respect the second order of convergence both for fluid velocity and structure deformation independently of the mesh.

\begin{table}
	\centering
	\begin{tabular}{| l | l r | l r | l r | l r |}
		\multicolumn{9}{c}{Fluid velocity} \\
		\hline
		& \multicolumn{2}{|c|}{\BD} & \multicolumn{2}{|c|}{\BDF} & \multicolumn{2}{|c|}{\CNM} & \multicolumn{2}{|c|}{\CNT}\\
		$\dt$ & $L^2$ error & rate & $L^2$ error & rate & $L^2$ error & rate & $L^2$ error & rate\\
		\hline	
		$0.05$   &$9.18\cdot 10^{-2}$&      &$3.89\cdot 10^{-2}$&      &$2.36\cdot 10^{-1}$&      &$2.39\cdot 10^{-1}$& \\
		$0.025$  &$5.05\cdot 10^{-2}$&$0.86$&$8.59\cdot 10^{-3}$&$2.18$&$7.54\cdot 10^{-2}$&$1.64$&$7.06\cdot 10^{-2}$&$1.76$\\
		$0.0125$ &$2.63\cdot 10^{-2}$&$0.94$&$3.32\cdot 10^{-3}$&$1.37$&$4.24\cdot 10^{-2}$&$0.83$&$2.22\cdot 10^{-2}$&$1.67$\\
		$0.00625$&$1.33\cdot 10^{-2}$&$0.98$&$1.40\cdot 10^{-3}$&$1.24$&$2.19\cdot 10^{-2}$&$0.96$&$4.19\cdot 10^{-3}$&$2.40$\\
		\hline
	\end{tabular}

	\begin{tabular}{| l | l r | l r | l r | l r |}
		\multicolumn{9}{c}{Structure deformation} \\
		\hline
		& \multicolumn{2}{|c|}{\BD} & \multicolumn{2}{|c|}{\BDF} & \multicolumn{2}{|c|}{\CNM} & \multicolumn{2}{|c|}{\CNT}\\
		$\dt$ & $L^2$ error & rate & $L^2$ error & rate & $L^2$ error & rate & $L^2$ error & rate\\
		\hline
		$0.05$   &$2.03\cdot 10^{-3}$&      &$7.86\cdot 10^{-4}$&      &$1.81\cdot 10^{-3}$&      &$6.51\cdot 10^{-4}$& \\
		$0.025$  &$1.06\cdot 10^{-3}$&$0.93$&$3.28\cdot 10^{-4}$&$1.26$&$9.75\cdot 10^{-4}$&$0.89$&$1.31\cdot 10^{-4}$&$2.31$\\
		$0.0125$ &$5.34\cdot 10^{-4}$&$1.00$&$1.44\cdot 10^{-4}$&$1.18$&$5.10\cdot 10^{-4}$&$0.93$&$4.82\cdot 10^{-5}$&$1.44$\\
		$0.00625$&$2.69\cdot 10^{-4}$&$0.99$&$6.31\cdot 10^{-5}$&$1.19$&$2.55\cdot 10^{-4}$&$1.00$&$1.29\cdot 10^{-5}$&$1.90$\\
		\hline
	\end{tabular}
	\caption{Convergence results for the semi-implicit scheme on the fine mesh}
	%computed on 2019-01-27
	\label{TableConvNavierSemiimpl16}
\end{table}
Tables~\ref{TableNavierSemiimplRes8} and~\ref{TableNavierSemiimplRes16} report the residual obtained with the semi-implicit scheme which is higher of two orders of magnitude than the tolerance prescribed in the fixed point iterations. However, this fact does not influence too much the size of the errors.

\begin{table}
	\centering
	\begin{tabular}{| l | c | c | c | c |}
		\hline
		$\dt$ & \BD & \BDF & \CNM & \CNT \\
		\hline
		$0.05$   &$3.83\cdot 10^{-3}$&$3.83\cdot 10^{-3}$&$9.87\cdot 10^{-3}$&$9.64\cdot 10^{-2}$\\
		$0.025$	 &$2.09\cdot 10^{-3}$&$2.30\cdot 10^{-3}$&$1.24\cdot 10^{-3}$&$2.17\cdot 10^{-2}$\\
		$0.0125$ &$7.41\cdot 10^{-4}$&$8.26\cdot 10^{-4}$&$3.62\cdot 10^{-4}$&$7.90\cdot 10^{-3}$\\
		$0.00625$&$2.23\cdot 10^{-4}$&$2.45\cdot 10^{-4}$&$1.08\cdot 10^{-4}$&$9.55\cdot 10^{-4}$\\	
		\hline
	\end{tabular}
	\caption{Maximum residual in the semi-implicit scheme on the coarse mesh}
	%computed on 2019-01-27
	\label{TableNavierSemiimplRes8}
\end{table}

\begin{table}
	\centering
	\begin{tabular}{| l | c | c | c | c |}
		\hline
		$\dt$ & \BD & \BDF & \CNM & \CNT \\
		\hline
		$0.05$   &$3.59\cdot 10^{-2}$&$3.59\cdot 10^{-2}$&$3.43\cdot 10^{-2}$&$3.81\cdot 10^{-2}$\\
		$0.025$  &$1.03\cdot 10^{-2}$&$1.07\cdot 10^{-2}$&$8.19\cdot 10^{-3}$&$1.02\cdot 10^{-2}$\\
		$0.0125$ &$5.28\cdot 10^{-3}$&$5.87\cdot 10^{-3}$&$1.54\cdot 10^{-3}$&$2.27\cdot 10^{-3}$\\
		$0.00625$&$1.46\cdot 10^{-3}$&$1.44\cdot 10^{-3}$&$4.33\cdot 10^{-4}$&$5.08\cdot 10^{-4}$\\	
		\hline
	\end{tabular}
	\caption{Maximum residual in the semi-implicit scheme on the fine mesh}
	%computed on 2019-01-27
	\label{TableNavierSemiimplRes16}
\end{table}
\subsection{The floating disk}
In \cite{G12} it is pointed out that the Immersed Boundary Method may show poor volume conservation properties. Roy, Heltai, and Costanzo in~\cite{RHC15} have compared the volume preservation properties of their numerical method and propose it as a benchmark problem for fluid-structure interaction codes. In the following we investigate the volume preserving properties of the higher-order time-stepping schemes applied to a circular disk placed in a lid-driven cavity. The movement of the fluid induces a motion to the disk, which is being deformed and transported. 

The computational domain is a square with unit length: $\Omega = (0,1)^2$. The disk has a diameter of $0.2$ and its center is initially placed at $(0.6, 0.5)$. We describe Dirichlet boundary conditions on the fluid velocity: on the left, right, and bottom part of the boundary no-slip boundary conditions are enforced; on the upper part the fluid is set to $\u = (1,0)^\top$. The fluid and the solid have the same density $\rho_f = \rho_s = 1$. The viscosity is $\nu = 0.01$ and the material behaves as $\P(\F) = \kappa \F$ with $\kappa = 0.1$. The final time of the simulation is $T=4$. For the space discretization the same code as in the previous test is used on finer meshes with $18,818$ DOFs for the fluid velocity and $7,009$ DOFs for the pressure. The dimension of the space $\S_h$ is equal to
$4,402$. The maximal diameter is $h_f = 0.029$ for fluid elements, and $h_s = 0.012$ for structure cells. 

\begin{figure}
 	\centering
 	\includegraphics[width=0.7\textwidth]{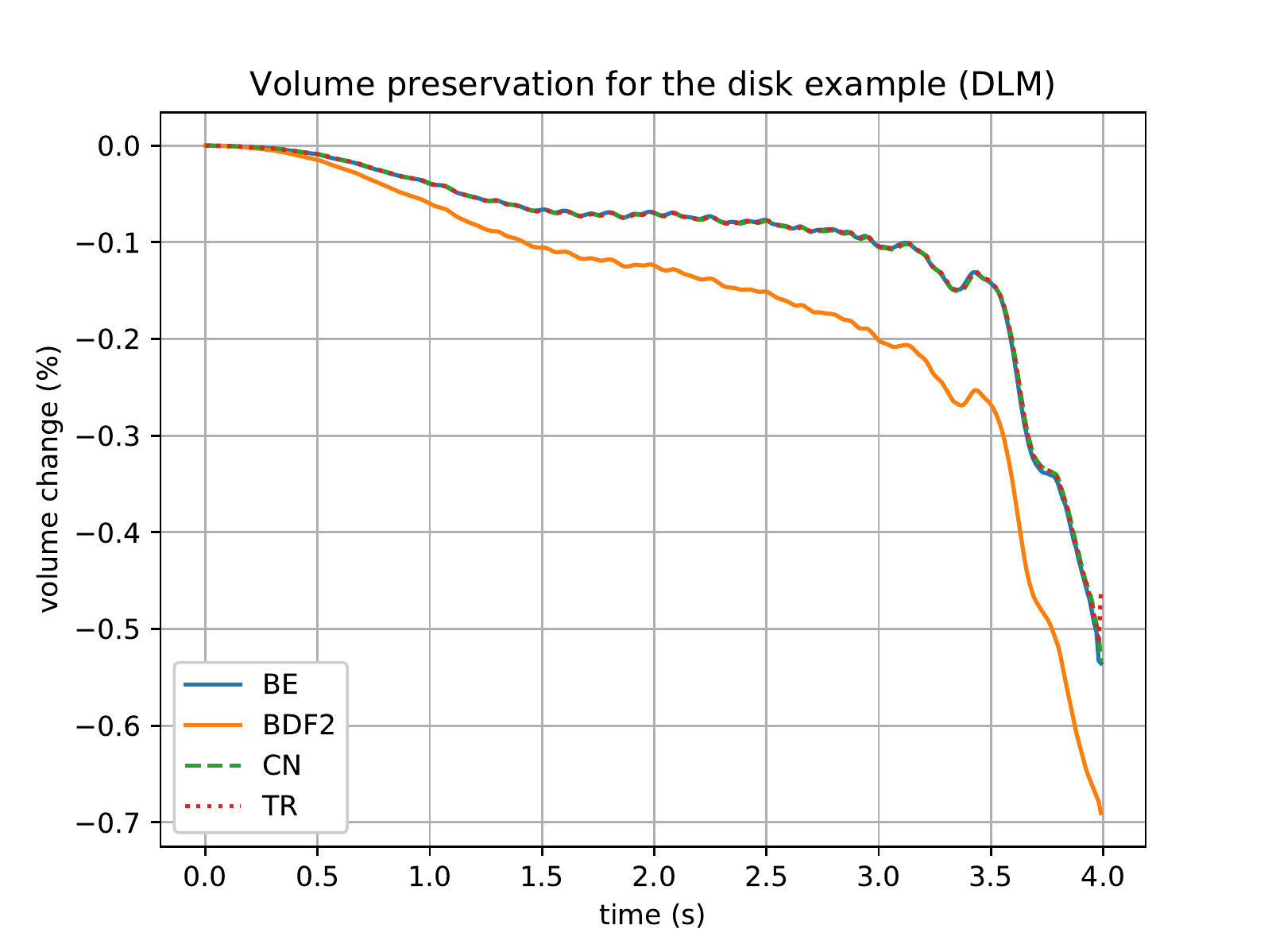}
 	\caption{Volume preservation over time}
 	\label{VolumePreservationDLM}
 \end{figure} 

The simulation has been run for all the four time marching schemes with a time-step of $\dt = 0.01$. At each time-step the volume of the immersed solid $\B_t$ is calculated and compared to the volume of the non-deformed solid. The percentage of volume change is plotted over time in Fig.~\ref{VolumePreservationDLM}.
Throughout the literature on Immersed Boundary Method this problem has been addressed.
Griffith and Luo~\cite{GL12} use a combination of finite differences for the fluid part and finite elements for the structure part. They report a volume conservation, which is more or less equal to results reported here. Wang and Zhan~\cite{WZ09} use finite element discretizations for fluid and solid, while the coupling is enforced via some interpolation function, which mimics the Dirac $\delta$ in the continuous problem. They report a much larger volume change than what we have found. Even their volume preserving scheme performs worse. In our numerical experiments, we note that the one step methods give better volume preservation, whereas \BDF method tends to reduce more the volume of the immersed solid.

The \BD, the \CNM, and the \CNT schemes produce the same volume preservation pattern. In the beginning the volume decreases, from $t=1.5$ to $t=3.0$ the volume almost stays constant. In the last part of the time interval, the volume decreases again  faster. At the end the volume is decreased by $0.5\%$. The \BDF method produces the same pattern, but the volume change is larger, at the final time the volume is decreased by $0.7\%$. It is interesting to note that the \BDF method, which is more accurate than the Backward Euler, produces a bigger volume change.

To investigate further the influence of the discretization parameters $\dt$, $h_f$, and $h_s$ on the volume preservation, the equations have been solved again, once on the same fine mesh with the doubled time-step $\dt = 0.02$ and once on a coarser mesh with the same time-step $\dt = 0.01$. The coarse mesh consists of $8,450$ velocity DOFs, $3,137$ pressure DOFs and $1,986$ DOFs for the structure deformation and the Lagrangian multiplier.
The volume preservation is plotted over time in Fig.~\ref{VolumePreservationCoarse} for the coarse mesh (left) and the coarser time-step (right). One can observe that the qualitative behavior is similar to that reported in Fig.~\ref{VolumePreservationDLM} for the fine mesh and double time step $\dt=0.02$. The behavior on the coarser mesh, reported in Fig.~\ref{VolumePreservationCoarse} (left), presents a larger kink at the end of the interval with respect to that in Fig.~\ref{VolumePreservationDLM} and the absolute volume change is approximately doubled, at the end the volume is decreased by $-1.0\%$. 

We conclude that in absolute numbers the volume change is the same for both simulations with less accuracy. The spatial discretization has a bigger influence on the qualitative behavior than the time discretization. This results is perfectly compatible with the fact that the area loss is strictly related to the approximation of the divergence free condition which, in turn, depends on the spatial discretization.

 \begin{figure}
	\centering
	\includegraphics[width=0.4\textwidth]{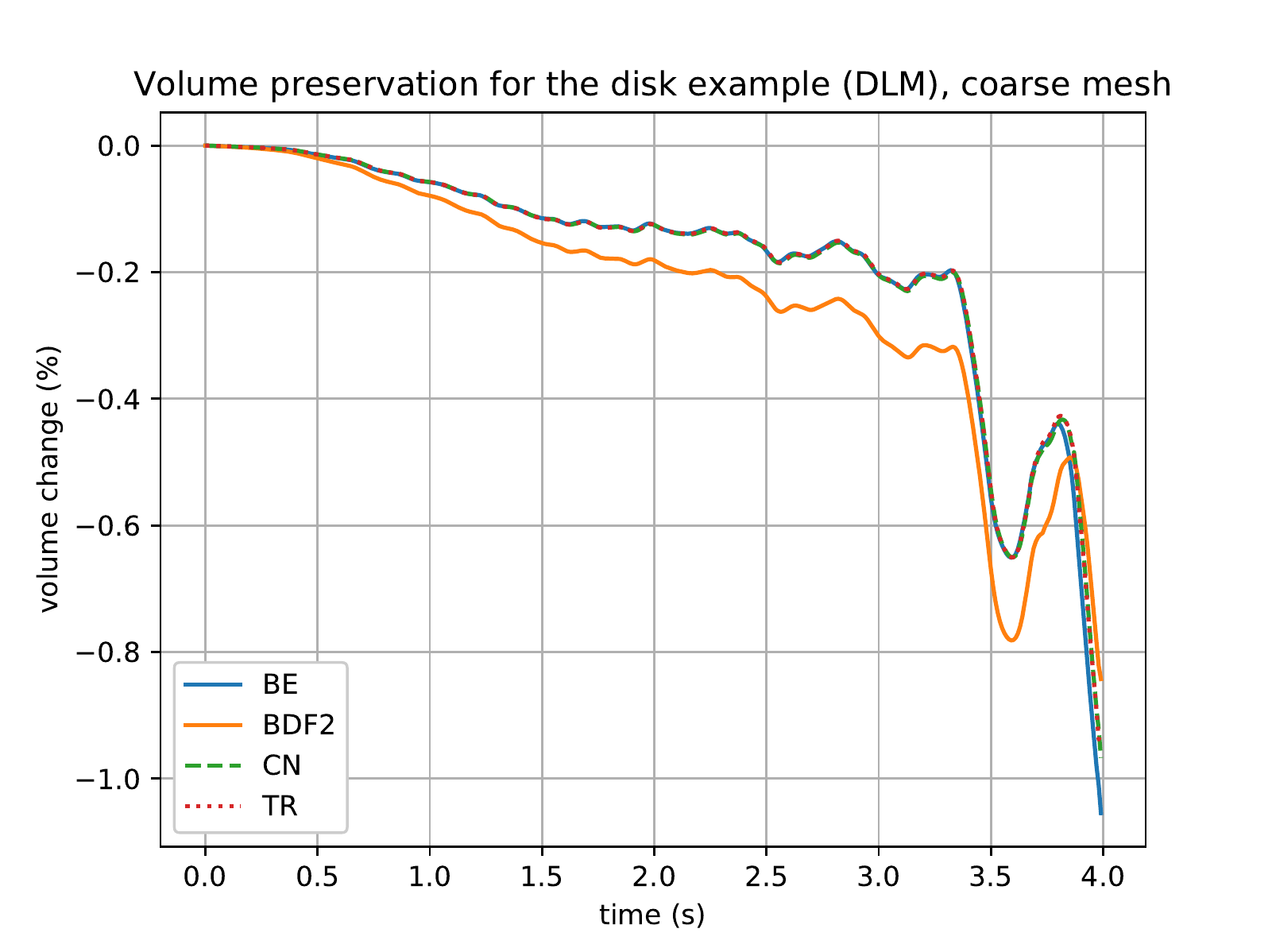}
	\includegraphics[width=0.4\textwidth]{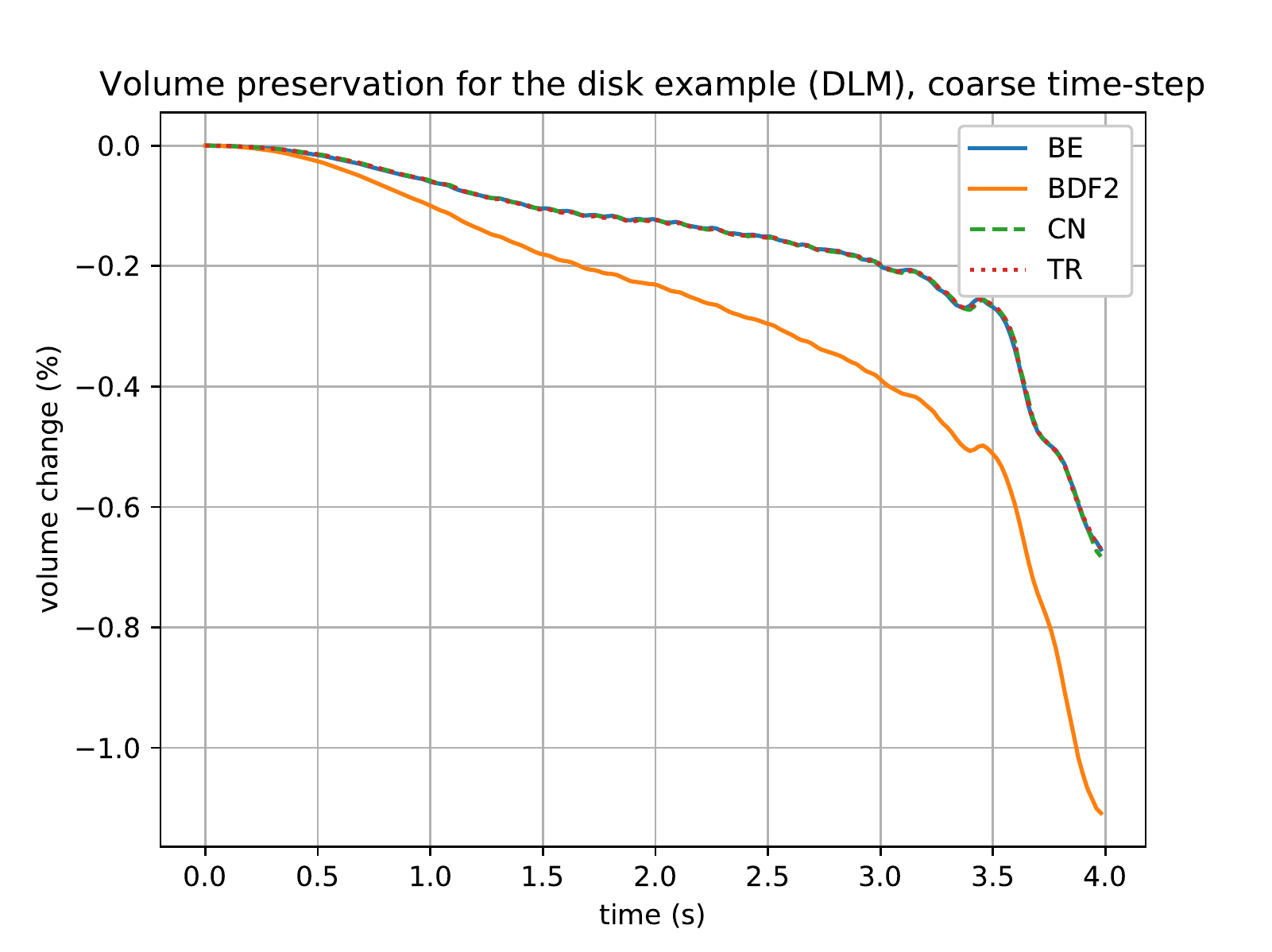}
	\caption{Volume preservation over time for coarser parameters}
	\label{VolumePreservationCoarse-eps-converted-to}
\end{figure} 

With a maximal area change of $-0.7\%$ on the fine mesh with the fine time-step our method performs very well in comparison to other methods. A direct comparison is done with the method developed by Heltai and Costanzo in \cite{HC12}. Their approach is quite similar to ours. They use a finite element approximation in space
based on rectangular meshes and Backward Euler method for the time discretization. Their solver has been run with $33,282$ velocity DOFs, $12,288$ pressure DOFs and $10,370$ DOFs for the displacement field. This resulted in a maximal volume change of $-2.6\%$. A plot over time is shown in Figure \ref{VolumePreservationIFEM}. It is interesting to note that the volume change is monotone for this method, while our method shows some oscillations. This could be due to the different type of meshes and the fact that intersection of a mapped structural element with the elements in the fluid mesh are simpler to detect.   

\begin{figure}
	\centering
	\includegraphics[width=0.7\textwidth]{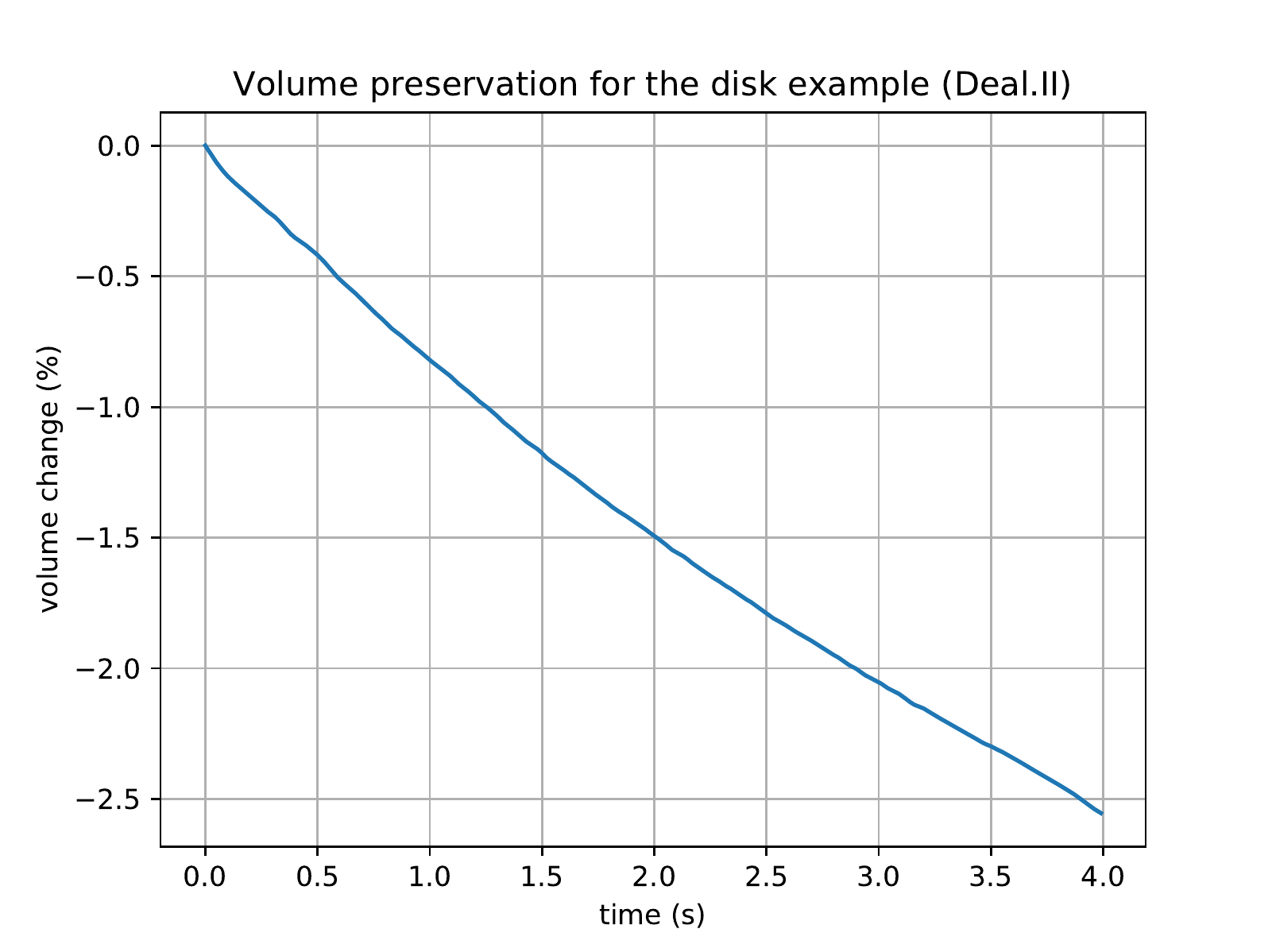}
	\caption{Volume preservation over time for IFEM method}
	\label{VolumePreservationIFEM}
\end{figure} 

\section{Conclusion}

In this paper we discussed a finite element discretization of fluid-structure interaction problems based in the use of a distributed Lagrange multiplier.
We introduced three higher-order time-stepping methods: \BDF, \CNM, and \CNT. We have proved unconditional stability estimates and we have performed a series of numerical tests.
In the numerical experiments the \BDF and \CNT methods showed-higher order convergence in the fully implicit version. Using a semi-implicit approach, the \CNT method showed second order convergence.

\section*{Acknowledgments}
The first and the second authors are members of the INdAM Research group GNCS and their research is partially supported by IMATI/CNR and by PRIN/MIUR

\bibliographystyle{plain}
\bibliography{paper}

\begin{thebibliography}{10}

\bibitem{BCGG12}
D.~Boffi, N.~Cavallini, F.~Gardini, and L.~Gastaldi.
\newblock {Local Mass Conservation of Stokes Finite Elements}.
\newblock {\em J. Sci. Comput.}, 52:383–400, 2012.

\bibitem{BCG11}
D.~Boffi, N.~Cavallini, and L.~Gastaldi.
\newblock {Finite Element approach to Immersed Boundary Method with different
  fluid and solid densities}.
\newblock {\em Math. Models Methods Appl. Sci}, 21(12):2523–2550, 2011.

\bibitem{BCG15}
D.~Boffi, N.~Cavallini, and L.~Gastaldi.
\newblock {The Finite Element Immersed Boundary Method with Distributed
  Lagrange Multiplier}.
\newblock {\em SIAM J. Numer. Anal.}, 53(6):2584--2604, 2015.

\bibitem{BG16}
D.~Boffi and L.~Gastaldi.
\newblock {Discrete models for fluid-structure interactions: The Finite Element
  Immersed Boundary Method}.
\newblock {\em Discrete Contin. Dyn. Syst., Ser. S}, 9:89--107, 2016.

\bibitem{BG17}
D.~Boffi and L.~Gastaldi.
\newblock {A Fictious Domain Approach with Distributed Lagrange Multipliers for
  Fluid-Structure Interactions}.
\newblock {\em Numer. Math.}, 135:711--732, 2017.

\bibitem{BGH07}
D.~Boffi, L.~Gastaldi, and L.~Heltai.
\newblock {Numerical stability of The Finite Element Immersed Boundary Method}.
\newblock {\em Mathematical Models and Methods in Applied Sciences},
  17:1479–1505, 2007.

\bibitem{BGH18}
D.~Boffi, L.~Gastaldi, and L.~Heltai.
\newblock {A distributed Lagrange formulation of the Finite Element Immersed
  Boundary Method for fluids interacting with compressible solids}.
\newblock In Boffi D., Pavarino L., Rozza G., Scacchi S., and Vergara C.,
  editors, {\em Mathematical and Numerical Modeling of the Cardiovascular
  System and Applications}, volume~16 of {\em SEMA SIMAI Springer Series}.
  Springer, 2018.

\bibitem{BGHP08}
D.~Boffi, L.~Gastaldi, L.~Heltai, and C.~S. Peskin.
\newblock {On the hyper-elastic formulation of the immersed boundary method}.
\newblock {\em Comput. Methods Appl. Mech. Eng.}, 197:2210–2231, 2008.

\bibitem{CGSW12}
W.~Chen, M.~Gunzburger, D.~Sun, and X.~Wang.
\newblock {Efficient and long-time accurate second-order methods for
  Stokes-Darcy Systems}.
\newblock {\em SIAM J. Numer. Anal.}, 51(5):2563--2584, 2013.

\bibitem{DB08}
Peter Deuflhard and Folkmar Bornemann.
\newblock {\em Numerische {M}athematik 2}.
\newblock de Gruyter Lehrbuch. [de Gruyter Textbook]. Walter de Gruyter \& Co.,
  Berlin, revised edition, 2008.
\newblock Gew\"{o}hnliche Differentialgleichungen. [Ordinary differential
  equations].

\bibitem{D10}
S.~Dong.
\newblock {BDF-like methods for nonlinear dynamic analysis}.
\newblock {\em J. Comput. Phys.}, 229:3019–3045, 2010.

\bibitem{G12}
B.~E. Griffith.
\newblock {On the Volume Conservation of the Immersed Boundary Method}.
\newblock {\em Commun. Comput. Phys.}, 12(02):401–432, 2012.

\bibitem{GL12}
B.~E. Griffith and X.~Luo.
\newblock {Hybrid finite difference/finite element immersed boundary method}.
\newblock {\em Int. J. Numer. Meth. Biomed. Engng.}, 33(12):e2888, 2012.

\bibitem{HC12}
L.~Heltai and F.~Costanzo.
\newblock {Variational Implementation of Immersed Finite Element Methods}.
\newblock {\em Comput. Methods Appl. Mech. Eng.}, 229-232:110 -- 127, 2012.

\bibitem{HR90}
J.~Heywood and R.~Rannacher.
\newblock {Finite-Element Approximation of the Nonstationary Navier–Stokes
  Problem. Part IV: Error Analysis for Second-Order Time Discretization}.
\newblock {\em SIAM J. Numer. Anal.}, 27(2):353--384, 1990.

\bibitem{IYD17}
O.~R. Isik, G.~Yuksel, and B.~Demir.
\newblock {Analysis of second order and unconditionally stable BDF2-AB2 method
  for the Navier-Stokes equations with nonlinear time relaxation}.
\newblock {\em Numer. Methods Partial Differ. Equations}, 34(6):2060--2078,
  2017.

\bibitem{J16}
V.~John.
\newblock {\em {Finite Element Methods for Incompressible Flow Problems}}.
\newblock Springer, 2016.

\bibitem{OFI10}
Y.~Okamoto, K.~Fujiwara, and Y.~Ishihara.
\newblock {Effectiveness of Higher Order Time Integration in Time-Domain
  Finite-Element Analysis}.
\newblock {\em IEEE Transactions on Magnetics}, 46(8):3321 -- 3324, Aug 2010.

\bibitem{P02}
C.~S. Peskin.
\newblock {The immersed boundary method}.
\newblock {\em Acta Numerica}, 11:479–517, 2002.

\bibitem{RHC15}
S.~Roy, L.~Heltai, and F.~Costanzo.
\newblock {Benchmarking the Immersed Finite Element Method for Fluid-Structure
  Interaction Problems}.
\newblock {\em Comput. Math. Appl.}, 69(10):1167 -- 1188, 2015.

\bibitem{WZ09}
X.~Wang and L.~T. Zhang.
\newblock {Interpolation functions in the immersed boundary and finite element
  methods}.
\newblock {\em Comput. Mech.}, 45(4):321, Dec 2009.

\end{thebibliography}

\end{document}